\begin{document}
\title{Powers of conjugacy classes in a finite group%\thanks{Grants or other notes
%about the article that should go on the front page should be
%placed here. General acknowledgments should be placed at the end of the article.}
}
%\subtitle{Do you have a subtitle?\\ If so, write it here}

%\titlerunning{Short form of title}        % if too long for running head

\author{A. Beltr\'an \and R.D. Camina \and M.J. Felipe \and C. Melchor %etc.
}

%\authorrunning{Short form of author list} % if too long for running head
\institute{Antonio Beltr\'an \and Carmen Melchor \at
              Departamento de Matem\'aticas,\\
              Universidad Jaume I, \\
              12071 Castell\'on, Spain\\
              \email{abeltran@uji.es $\cdot$  cmelchor@uji.es}           %  
%             \emph{Present address:} of F. Author  %  if needed
           \and
           Rachel Deborah Camina \at
           Deparment of Pure Mathematics and Mathematical Statistics,\\
            University of Cambridge,\\  Fitzwilliam College, Cambridge, CB3 0DG, UK\\
            \email{rdc26@dpmms.cam.ac.uk}
           \and
           Mar\'{\i}a Jos\'e Felipe \at
           Instituto Universitario de Matem\'atica Pura y Aplicada,\\
            Universidad Polit\'ecnica de Valencia, \\
            46022 Valencia, Spain\\
             \email{mfelipe@mat.upv.es}
}

\date{Received: date / Accepted: date}
% The correct dates will be entered by the editor

\maketitle

\begin{abstract}
Many results have been established that show how the number of conjugacy classes appearing in the product of classes affect the structure of a finite group. The aim of this paper is to show several results about solvability concerning the case in which the power of a conjugacy class is a union of one or two conjugacy classes. Moreover, we show that the above conditions can be determined through the character table of the group.
\keywords{Finite groups \and Conjugacy classes \and Solvability\and Power of conjugacy classes \and Characters}
% \PACS{PACS code1 \and PACS code2 \and more}
\subclass{20E45 \and 20D15 \and 20C20}
\end{abstract}

\section{Introduction}\label{intro}
Let $G$ be a finite group. The product of conjugacy classes is a $G$-invariant set, and consequently, is a union of classes. There exist many results about the structure of a finite group regarding the number of conjugacy classes in the product of its classes, some of which are related to the normal structure and the non-simplicity of the group. In this paper we study three problems about the power of a conjugacy class, each of them corresponds to a section.\\

In \cite{products}, Z. Arad and M. Herzog conjectured that in a non-abelian simple group the product of two non-trivial conjugacy classes is not a conjugacy class. The conjecture has received much attention and has been confirmed for several families of simple groups. We propose the following.  

\begin{conjecture}\label{Con1}
In a non-abelian finite simple group the product of $n$ non-trivial conjugacy classes with $n\in \mathbb{N}$ and $n\geq 2$ is not a conjugacy class.
\end{conjecture}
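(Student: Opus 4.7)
The approach is to reduce the statement directly to the $n=2$ case, which is precisely the Arad--Herzog conjecture; what one can realistically prove is therefore not the statement in isolation, but its equivalence to Arad--Herzog, so that every family of simple groups for which $n=2$ has been verified automatically satisfies Conjecture~\ref{Con1} in full.

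For the reduction, let $G$ be a non-abelian finite simple group and suppose, toward a contradiction, that $K_1 K_2 \cdots K_n = C$ is a single conjugacy class for some $n \geq 3$, with each $K_i$ non-trivial. Decompose the $(n-1)$-fold product into distinct conjugacy classes, $K_1 \cdots K_{n-1} = D_1 \cup \cdots \cup D_m$. Then
\begin{equation*}
C \;=\; K_1 \cdots K_n \;=\; \bigcup_{i=1}^{m} D_i K_n,
\end{equation*}
and since each $D_i K_n$ is a non-empty $G$-invariant subset of the single class $C$, necessarily $D_i K_n = C$ for every $i$. If some $D_i$ is non-trivial, then $D_i K_n = C$ is a product of two non-trivial classes equal to one class, contradicting Arad--Herzog.

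It remains to rule out the degenerate possibility that every $D_i$ equals $\{1\}$, equivalently $K_1 \cdots K_{n-1} = \{1\}$. For this I would invoke the elementary bound $|AB| \geq |A|$ valid for any subsets $A,B$ of a finite group (fix $b \in B$ and note $Ab \subseteq AB$ with $|Ab| = |A|$). Iterating yields $|K_1 \cdots K_{n-1}| \geq |K_1| \geq 2$, since in a non-abelian simple group the center is trivial and hence every non-trivial conjugacy class has size at least two, contradicting $K_1 \cdots K_{n-1} = \{1\}$.

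The main obstacle is precisely that the argument collapses the problem to the Arad--Herzog conjecture itself, which remains open in general; any substantive progress on Conjecture~\ref{Con1} must come from new progress on the $n=2$ case, and conversely any counterexample for some $n \geq 3$ would immediately supply one for $n=2$. In the spirit of the paper's programme of detecting such phenomena from the character table, a complementary angle would be to recast the condition ``$K_1 \cdots K_n$ is a single class'' through the Burnside--Frobenius structure-constant formula for class sums in $Z(\mathbb{C}[G])$, translating the reduction above into character-sum identities that one could then try to establish for specific families of simple groups.
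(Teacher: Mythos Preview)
Your reduction is correct, and you are right that it does not prove the conjecture but shows it is equivalent to the Arad--Herzog conjecture. This is genuinely different from what the paper does with Conjecture~\ref{Con1}. The paper never observes the reduction $n\Rightarrow 2$; instead it establishes the character identity of Theorem~\ref{Char1} and then, for the sporadic simple groups, verifies Conjecture~\ref{Con1} by direct computation over all $n$-tuples of classes for $3\le n\le 6$ (relying on the fact that the extended covering number of each sporadic group is at most $7$). Your argument makes that computation unnecessary: since Arad--Herzog is already known for the sporadic groups, your reduction yields Conjecture~\ref{Con1} for them immediately, and likewise for any other family where the $n=2$ case has been checked. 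What the paper's route buys, conversely, is an explicit character-table criterion (Theorem~\ref{Char1}) detecting when a product of $n$ classes is a single class, which is of independent interest; and for the diagonal case $K_1=\cdots=K_n=K$ the paper proves unconditionally (Theorems~\ref{1} and~\ref{S1}) that $[x,G]\trianglelefteq G$ and $\langle K\rangle$ is solvable, a structural conclusion your reduction alone does not give since it only hands the problem back to the still-open $n=2$ case.
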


To tackle this conjecture we prove a characterisation of the property using irreducible characters, see Theorem \ref{Char1}. This enables us to prove that Conjecture \ref{Con1} holds for sporadic simple groups.\\

In \cite{GuralnickNavarro}, R.M. Guralnick and G. Navarro confirmed the conjecture of Arad and Herzog for the particular case of a square of a conjugacy class. We prove the following theorem which confirms Conjecture \ref{Con1} for the case when a product of a single non-trivial conjugacy class is considered. We use the notation $x^G$ to denote the conjugacy class of an element $x\in G$.\\

\textbf{Theorem A.} \textit{Let $K=x^G$ be a conjugacy class of a group $G$. There exists $n\in \mathbb{N}$ and $n\geq 2$ satisfying that $K^n$ is a conjugacy class if and only if $$\chi(x)^n=\chi(1)^{n-1}\chi(x^n)$$ for all $\chi \in {\rm Irr}(G)$. In this case, $\langle K\rangle$ is solvable.}\\

To prove the solvability of $\langle K\rangle$ in Theorem A we utilise the Classification of Finite Simple Groups (CFSG). However, we note that in many cases CFSG is not needed, in particular, when the order of the elements in the conjugacy class is prime, or a power of $2$, or if the classes are real. These results are collected in Theorems \ref{NOCFSG} and \ref{NOCFSGR} of Section 2.\\

In Sections 3 and 4 we will focus on two cases when the power of a conjugacy class is a union of exactly two conjugacy classes. In the first case we suppose one of these conjugacy classes is the trivial class, we demonstrate the following theorem. \\

\textbf{Theorem B.} \textit{Suppose that $K$ is a conjugacy class of a group $G$ such that $K^{n}=\{1\} \cup D$ for some $n\in \mathbb{N}$ with $n\geqslant 2$ and $D$ is a non-trivial conjugacy class. Then $KK^{-1}=\{1\} \cup D$ and $\langle K\rangle$ is solvable.}\\

In \cite{Nuestro5}, Theorem B is proved for the particular case $n=2$ without using the CFSG and the structure of $\langle K\rangle$ and $\langle D\rangle$ is determined.\\

In the second case we suppose the two conjugacy classes are inverse to each other. We believe the following to hold. 

\begin{conjecture}\label{Con3} Let $G$ be a group and let $K$ be a conjugacy class. If $K^n=D \cup D^{-1}$ for some $n\in \mathbb{N}$ and $n\geq 2$ and $D$ a conjugacy class, then $\langle K \rangle$ is solvable. In particular, $G$ is not simple.  
\end{conjecture}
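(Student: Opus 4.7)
First observe that if $D = D^{-1}$ then $K^{n} = D$ is a single conjugacy class, and Theorem~A immediately gives that $\langle K \rangle$ is solvable; so we may assume $D \neq D^{-1}$. Writing $\widehat K^{\,n} = a\,\widehat D + b\,\widehat{D^{-1}}$ in the class algebra with positive integers $a,b$ satisfying $(a+b)|D|=|K|^{n}$, and applying the central character $\omega_{\chi}$ for each $\chi \in \mathrm{Irr}(G)$, one obtains a character-theoretic criterion analogous to Theorem~\ref{Char1}:
\[
\chi(x)^{n} \;=\; \chi(1)^{n-1}\bigl(\alpha\,\chi(d) + (1-\alpha)\,\overline{\chi(d)}\bigr) \qquad (\chi \in \mathrm{Irr}(G)),
\]
where $\alpha = a/(a+b) \in (0,1)$ is a rational constant independent of $\chi$. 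Because central characters separate the elements of $Z(\mathbb{C}G)$, this identity (together with the rationality constraint on $\alpha$) is equivalent to the hypothesis $K^{n} = D \cup D^{-1}$; on the characters with $\chi(d) \in \mathbb{R}$ it specialises to the Theorem~A identity $\chi(x)^{n} = \chi(1)^{n-1}\chi(d)$.

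The next step is an induction on $|G|$. For any minimal normal subgroup $M$ of $G$, the $n$-th power of the class of $xM$ in $G/M$ equals $(dM)^{G/M} \cup (d^{-1}M)^{G/M}$: if these two classes coincide, Theorem~A applied in $G/M$ gives that $\langle K \rangle M/M$ is solvable; if they are distinct, the inductive hypothesis does the same. Thus $\langle K \rangle M/M$ is always solvable, and if $M$ is abelian then $\langle K \rangle$ is solvable. A standard argument then reduces the problem to the case in which $G$ is a non-abelian finite simple group, which is precisely the ``$G$ is not simple'' clause of the statement.

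The main obstacle is the simple-group case. Using CFSG one must rule out, for every non-abelian finite simple group $S$, the existence of a non-trivial class $K$ and a non-real class $D$ with $K^{n} = D \cup D^{-1}$ for some $n \geq 2$. For the sporadic groups this is a finite verification from the character tables via the criterion above, analogous to what we did for Conjecture~\ref{Con1}. For the alternating groups, a direct argument using the known character values on cycle structures should be feasible. The chief difficulty, as in Theorem~A and in the Arad--Herzog conjecture itself, is the families of groups of Lie type, where one expects to have to combine the techniques of Guralnick and Navarro~\cite{GuralnickNavarro} for squares of conjugacy classes with those used in our proof of Theorem~A.
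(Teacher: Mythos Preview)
The statement you are addressing is labelled \emph{Conjecture}~\ref{Con3} in the paper, and the paper does \emph{not} prove it. What the paper offers is partial evidence: Theorem~C disposes of the case $|D|=|K|/2$, Theorem~D handles the special case $n=2$ with $D=K$, Theorem~\ref{Char3} gives the character-theoretic reformulation (essentially your displayed identity), and the Remark following it verifies non-simplicity for the sporadic groups only. The case $|D|=|K|$ in general is left open, and so is the non-simplicity assertion for alternating groups and groups of Lie type. There is therefore no ``paper's own proof'' to compare your proposal against.

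Your proposal is, correspondingly, not a proof but a programme, and you effectively say so yourself: after the inductive reduction you write that the simple-group case is ``the main obstacle'', that for Lie type ``one expects to have to combine'' various techniques, and so on. That is an honest description of the state of affairs, but it is not an argument.

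One genuine gap in the reduction step deserves mention. Your claim that ``a standard argument then reduces the problem to the case in which $G$ is a non-abelian finite simple group'' is too quick. In a minimal counterexample you obtain only that every minimal normal subgroup $M$ is non-abelian and that $\langle K\rangle M/M$ is solvable; this does not force $G$ itself to be simple, nor does it hand you a conjugacy class of a simple group satisfying the hypothesis $K^{n}=D\cup D^{-1}$. Compare the proof of Theorem~B in the paper: after the analogous reduction to a unique non-abelian minimal normal subgroup, substantial further work invoking Theorems~\ref{Teo2}, \ref{Teo3} and \ref{Teo5} is required to reach a contradiction. An analogue of that step for the present hypothesis is not supplied, and it is not clear one exists without new ideas.
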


We provide the following evidence to support this conjecture.\\

\textbf{Theorem C.} \textit{Let $G$ be a group and let $K$ be a conjugacy class. If $K^n=D \cup D^{-1}$ for some $n\in \mathbb{N}$ and $n\geq 2$ and $D$ a conjugacy class, then either $|D|=|K|/2$ or $|K|=|D|$. In the first case, $\langle K\rangle$ is solvable.}\\

\textbf{Theorem D.} \textit{Let $G$ be a group and let $K=x^G$ be a conjugacy class of $G$. If $K^2=K \cup K^{-1}$, then $\langle K\rangle$ is solvable. Moreover, $x$ is a $p$-element for some prime $p$.}\\

We will also obtain characterizations with irreducible characters of the properties stated in Theorems B and C. These are collected in Theorems \ref{Char2} and \ref{Char3}. All groups are supposed to be finite. 

\section{Powers of classes which are classes}
In this section we prove that Conjecture \ref{Con1} is true for the particular case of the \emph{n}th power of a conjugacy class, $n\geq2$. Furthermore, we obtain an equivalent property in terms of irreducible characters and prove the solvability of the subgroup generated by such a conjugacy class by means of the CFSG.\\

We use the following lemma to prove Theorem \ref{1}, which will be useful to obtain the solvability part of Theorem A. We denote by $\mathbb{C}[G]$ the complex group algebra over the complex field $\mathbb{C}$. Let $K$ be a conjugacy class of $G$ and denote by $\widehat{K}$ the class sum of the elements of $K$ in $\mathbb{C}[G]$.

\begin{lemma}\label{G1}{{\rm (Lemma 2.1 of \cite{GuralnickNavarro})}} Let $x\in G$, where $G$ is a finite group, and let $K=x^G$. Then the following are equivalent:
\begin{enumerate}
\item $\widehat{K}x \in {\rm \bf Z}(\mathbb{C}[G]).$
\item $\widehat{K}x^{-1} \in {\rm \bf Z}(\mathbb{C}[G]).$
\item For each character $\chi \in {\rm Irr}(G)$, either $\chi(x)=0$ or $|\chi(x)|=\chi(1)$.
\end{enumerate}
\end{lemma}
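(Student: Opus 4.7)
The plan is to pass through the Wedderburn decomposition of the group algebra. Write $\mathbb{C}[G]\cong\bigoplus_{\chi\in\mathrm{Irr}(G)}\mathrm{End}(V_\chi)$ via the irreducible representations $\rho_\chi$. Under this isomorphism, the center $Z(\mathbb{C}[G])$ corresponds to the scalar matrices in each factor, so an element $a\in\mathbb{C}[G]$ lies in $Z(\mathbb{C}[G])$ if and only if $\rho_\chi(a)$ is a scalar matrix for every $\chi\in\mathrm{Irr}(G)$. This is the single structural fact I would rely on repeatedly.

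Next I would compute $\rho_\chi(\widehat{K}x)$. Since $\widehat{K}$ is already central, Schur's lemma gives $\rho_\chi(\widehat{K})=\omega_\chi(\widehat{K})\,I=\tfrac{|K|\chi(x)}{\chi(1)}I$. Hence
\[
\rho_\chi(\widehat{K}x)=\frac{|K|\chi(x)}{\chi(1)}\,\rho_\chi(x).
\]
From this expression, $\rho_\chi(\widehat{K}x)$ is a scalar matrix in exactly two situations: either the leading coefficient is zero (equivalently $\chi(x)=0$), or $\rho_\chi(x)$ itself is a scalar matrix. Conversely, if $\chi(x)\ne 0$ and $\rho_\chi(\widehat{K}x)$ is scalar, then dividing by the nonzero coefficient shows $\rho_\chi(x)$ is scalar. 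This establishes the equivalence of (1) and (3) once I invoke the standard character-theoretic fact that $\rho_\chi(x)$ is scalar if and only if $|\chi(x)|=\chi(1)$ (one direction is trivial; the other follows because $\chi(x)$ is a sum of $\chi(1)$ roots of unity, and equality in the triangle inequality forces them to be equal).

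For the equivalence of (2) and (3) I would run the same argument with $x^{-1}$ in place of $x$, noting that $\chi(x^{-1})=\overline{\chi(x)}$, so $\chi(x^{-1})=0\iff\chi(x)=0$ and $|\chi(x^{-1})|=\chi(1)\iff|\chi(x)|=\chi(1)$. Thus condition (3) is symmetric in $x$ and $x^{-1}$, and the same computation $\rho_\chi(\widehat{K}x^{-1})=\tfrac{|K|\chi(x)}{\chi(1)}\rho_\chi(x^{-1})$ gives the result.

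There is no real obstacle here; the argument is essentially a one-line consequence of Schur's lemma together with the Wedderburn decomposition. The only point that requires care is the equivalence $|\chi(x)|=\chi(1)\iff\rho_\chi(x)$ is scalar, which I would either cite as a standard fact or prove in a sentence using the triangle inequality on the eigenvalues of $\rho_\chi(x)$.
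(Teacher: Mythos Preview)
Your argument is correct and is essentially the standard proof of this fact. Note, however, that the paper does not actually prove this lemma: it is quoted verbatim as Lemma~2.1 of Guralnick--Navarro and used as a black box. Your Wedderburn/Schur approach is precisely the expected proof (and presumably close to what appears in the cited reference), so there is nothing substantive to compare.
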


In the next theorem we find a normal subgroup of a group when there is a conjugacy class such that some of its powers is again a conjugacy class, and an equivalent property in terms of the irreducible characters of the group. This result extends the first half of Theorem A of \cite{GuralnickNavarro} in which the authors prove the case $n=2$. The techniques of the proof are the same.

\begin{theorem} \label{1} Let $G$ be a group and $K=x^G$ with $x\in G$, $n\in \mathbb{N}$ and $n\geq 2$. The following assertions are equivalent:
\begin{enumerate}[label=(\alph*)]
\item $K^n$ is a conjugacy class
\item ${\rm \bf C}_G(x)={\rm \bf C}_G(x^n)$ and $N=x^{-1}K=K^{-1}K=[x,G]\unlhd G$
\item ${\rm \bf C}_G(x)={\rm \bf C}_G(x^n)$ and $\chi(x)=0$ or $|\chi(x)|=\chi(1)$ for all $\chi\in$ {\rm Irr}$(G)$.
\end{enumerate}
\end{theorem}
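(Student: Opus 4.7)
The plan is to prove the cycle $(a)\Rightarrow(c)\Rightarrow(b)\Rightarrow(a)$, using Lemma \ref{G1} as the bridge between the character condition in $(c)$ and the algebraic statement $\widehat{K}x^{-1}\in{\bf Z}(\mathbb{C}[G])$. This extends the Guralnick--Navarro argument for $n=2$; the new twist is that the centralizer hypothesis ${\bf C}_G(x)={\bf C}_G(x^n)$ must be extracted from $(a)$ via a positivity argument and then fed back as an input for the reverse implication.

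For $(a)\Rightarrow(c)$, I begin with the class-sum identity $\widehat{K}^{\,n}=c\,\widehat{(x^n)^G}$, where $c=|K|^n/|(x^n)^G|$. Applying the central character $\omega_\chi$ to both sides and simplifying gives $\chi(x)^n=\chi(1)^{n-1}\chi(x^n)$ for every $\chi\in{\rm Irr}(G)$. Taking absolute values and plugging into the column orthogonality relations,
\[|{\bf C}_G(x)|=\sum_{\chi}|\chi(x)|^2,\qquad |{\bf C}_G(x^n)|=\sum_{\chi}|\chi(x^n)|^2=\sum_{\chi}\frac{|\chi(x)|^{2n}}{\chi(1)^{2n-2}},\]
the difference $|{\bf C}_G(x)|-|{\bf C}_G(x^n)|$ becomes a sum of manifestly non-negative terms $|\chi(x)|^2\bigl(1-(|\chi(x)|/\chi(1))^{2n-2}\bigr)$. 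Combined with the trivial inclusion ${\bf C}_G(x)\leq{\bf C}_G(x^n)$, this pincer forces equality of the two centralizers and simultaneously makes every individual summand vanish, yielding the dichotomy $\chi(x)=0$ or $|\chi(x)|=\chi(1)$.

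For $(c)\Rightarrow(b)$, Lemma \ref{G1} gives $\widehat{K}x^{-1}\in{\bf Z}(\mathbb{C}[G])$. Since $y\mapsto yx^{-1}$ is a bijection from $K$ onto $N:=Kx^{-1}=x^{-1}K$ (the equality coming from $G$-invariance of $K$), centrality says $\widehat{N}$ is a $0$-$1$ central element, so $N$ is a union of conjugacy classes. A direct calculation shows that $\widehat{K}x^{-1}$ acts on each irreducible $\chi$-module as $0$ when $\chi(x)=0$ and as $|K|\cdot I$ when $|\chi(x)|=\chi(1)$; squaring therefore yields $(\widehat{N})^{2}=|K|\,\widehat{N}=|N|\widehat{N}$. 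Matching coefficients in this identity in $\mathbb{C}[G]$ forces $e\in N$, $N^{-1}=N$ and $N\cdot N\subseteq N$, so $N\unlhd G$. The remaining identifications are formal: $N$ is a subgroup containing every commutator $[g,x]$, so $N=[x,G]$, and writing $K=xN$ with $N$ normal gives $K^{-1}K=(Nx^{-1})(xN)=N$. Finally, $(b)\Rightarrow(a)$ is immediate: normality of $N$ gives $K^n=(xN)^n=x^nN$, and the centralizer hypothesis forces $|(x^n)^G|=|K|=|N|=|x^nN|$, so $K^n=(x^n)^G$ is a single conjugacy class.

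The main obstacle I anticipate is the step that converts the algebraic identity $(\widehat{N})^{2}=|N|\widehat{N}$ into the honest subgroup conclusion $N\leq G$: one must use that $\widehat{N}$ has $0$-$1$ coefficients and count multiplicities carefully on each group element to extract closure under products and inverses for $N$. Everything else reduces either to a routine central-character manipulation or to coset arithmetic once normality of $N$ is in hand.
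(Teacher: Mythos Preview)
Your proof is correct, but it traverses the cycle in the opposite direction from the paper: the paper proves $(a)\Rightarrow(b)\Rightarrow(c)\Rightarrow(a)$, while you do $(a)\Rightarrow(c)\Rightarrow(b)\Rightarrow(a)$.

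For $(a)\Rightarrow(b)$ the paper uses only elementary counting: the chain $|K|\le|K^2|\le\cdots\le|K^n|=|(x^n)^G|\le|K|$ forces $|K^j|=|K|$ throughout, hence $xK=K^2$; varying the representative gives $[x,g]K=K$ for all $g$, so the subgroup $N=\langle[x,g]:g\in G\rangle$ satisfies $NK=K$ and one reads off $K=xN$. The implication $(b)\Rightarrow(c)$ is then a one-line appeal to Lemma~\ref{G1}, and $(c)\Rightarrow(a)$ uses only that $Kx$ is $G$-invariant to get $K^2=xK$, hence $K^n=x^{n-1}K=(x^n)^G$ by induction and the centralizer hypothesis. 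Your route replaces both nontrivial steps by character-theoretic arguments. The positivity step in your $(a)\Rightarrow(c)$ --- squeezing $|{\bf C}_G(x)|-|{\bf C}_G(x^n)|$ between $0$ and a sum of nonnegative terms via second orthogonality --- is elegant and yields the centralizer equality and the dichotomy simultaneously, whereas the paper derives them in separate implications. Your $(c)\Rightarrow(b)$ via the idempotent-type identity $\widehat{N}^{\,2}=|N|\,\widehat{N}$ is also valid (coefficient comparison indeed forces $1\in N$, $N=N^{-1}$, $NN\subseteq N$), but it is heavier than needed: once Lemma~\ref{G1} makes $Kx$ $G$-invariant, the paper's two-line argument gives $K^n=x^{n-1}K$ directly, and normality of $N$ follows from the combinatorial $(a)\Rightarrow(b)$ without ever computing $\widehat{N}^{\,2}$. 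In short, you trade set-counting for representation-theoretic identities; both work, the paper's path is shorter, yours offers a more conceptual reason why the character dichotomy is forced.
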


\begin{proof} Let us prove that (a) implies (b). Since $x^n\in K^n$ and $K^n$ is a conjugacy class, it follows that $(x^n)^G=K^n$. Furthermore, for all $2 \leq j \leq n$, we see that $xK^{j-1}\subseteq K^j$ and so $|K|\leq |K^j|\leq |K^n|$. On the other hand, since ${\rm \bf C}_G(x)\subseteq{\rm \bf C}_G(x^n)$, we have $|K^n|\leq |K|$. Thus, $|K|=|K^j|=|K^n|$ and ${\rm \bf C}_G(x)={\rm \bf C}_G(x^n)$. In particular, $xK^{n-1}=K^n$ and $xK=K^2$. Let $y\in K$ then $yK=K^2=xK$ and so $x^{-1}yK=K$. As $y=x^g$ for some $g\in G$ it follows that $[x,g]K=K$. So, for $N=[x, G]=\langle [x,g]\, \, |\, \, g \in G\rangle$ we have $NK=K$ and so $Nx\subseteq K$ and $|N|=|Nx|\leq |K|$. However, as $K=x\{[x,g]\, \, | \, \, g\in G\}\subseteq xN$, it follows that $|K|\leq |xN|=|N|$. Consequently, $K=xN$. Furthermore, $K^{-1}=x^{-1}[x, G]$ and $KK^{-1}=[x, G]$ as required.\\

Suppose (b) and let us see (c). Since $Kx^{-1}=N$, then $\widehat{K}x^{-1}=\widehat{N}$. Also, $\widehat{N}\in {\rm \bf Z}(\mathbb{C}[G])$ since $N\triangleleft G$. Therefore, assertion (c) holds by Lemma 2.1 (3).\\

Assuming (c) now, Lemma \ref{G1} guarantees that $\widehat{K}x$ is central in $\mathbb{C}[G]$, and thus the set $Kx$ is closed under conjugation. Let us see that $K^2=xK$. Clearly, $Kx\subseteq K^2$ and let $x^gx^h\in K^2$ for some $g, h \in G$. Thus, $((x^g)^{h^{-1}}x)^h \in (Kx)^h=Kx$. Therefore, $K^2=xK$. We obtain by induction that $K^n=x^{n-1}K$. Since $|K^n|=|K|=|x^G|=|(x^n)^G|$, then $K^n=(x^n)^G$ and (a) is proved.
\end{proof}

\begin{remark}\label{RemCom} As a consequence of Theorem \ref{1} we have that if $[x, G]=\{{[x,g]\, | \, g \in G}\}$, then $K^n$ is a conjugacy class when $(n, o(x))=1$. 
\end{remark}

It follows, from Theorem \ref{1}, that if $G$ is a finite group with a non-central conjugacy class $K$ such that $K^n$ is a conjugacy class for some $n\geq 2$, then $G$ is not simple. The following corollaries will be useful to prove some results later. 

\begin{corollary}\label{C1} Let $G$ be a group and $K=x^G$ with $x \in G$ such that $K^n$ is a conjugacy class for some $n\in \mathbb{N}$ with $n\geq 2$. Then $|K^r| = |K|$ for all $r\in \mathbb{N}$ and $K^{o(x)+1}=K$ and $K^{o(x)-1}=K^{-1}$. Moreover, $K^m$ is a conjugacy class for all $m\in \mathbb{N}$ such that $(m, o(x))=1$.
\end{corollary}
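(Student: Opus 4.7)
The plan is to reduce everything to the structural description of $K$ supplied by Theorem \ref{1}. Since $K^n$ is a conjugacy class, part (b) of that theorem provides a normal subgroup $N=[x,G]\unlhd G$ satisfying $K=xN$. From this I would first establish, by a straightforward induction on $r$ using the normality of $N$ (so that $Nx=xN$ and $N^2=N$), the identity $K^r=x^rN$ for every $r\in\mathbb{N}$. This immediately gives $|K^r|=|N|=|K|$.

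Next I would specialise this formula. Taking $r=o(x)+1$ yields $K^{o(x)+1}=x^{o(x)+1}N=xN=K$, and taking $r=o(x)-1$ yields $K^{o(x)-1}=x^{-1}N$. A short computation using normality of $N$ shows $x^{-1}N=Nx^{-1}=(xN)^{-1}=K^{-1}$, which gives the second required equality.

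For the final assertion, assume $(m,o(x))=1$, so that $\langle x^m\rangle=\langle x\rangle$ and therefore ${\rm \bf C}_G(x^m)={\rm \bf C}_G(x)$; in particular $|(x^m)^G|=|K|$. On the other hand, $K^m=x^mN$ is a $G$-invariant subset containing $x^m$, hence $(x^m)^G\subseteq K^m$, and comparing cardinalities forces $K^m=(x^m)^G$, a single conjugacy class. No step is expected to present any real difficulty; the only subtlety worth flagging is that the crucial identity $K^r=x^rN$ holds precisely because $N$ is normal in $G$, so Theorem \ref{1}(b) does essentially all of the work.
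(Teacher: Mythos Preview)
Your proposal is correct and follows essentially the same approach as the paper: both invoke Theorem~\ref{1}(b) to obtain $K=xN$ with $N=[x,G]\unlhd G$, derive $K^r=x^rN$ (hence $|K^r|=|K|$), and then read off the remaining statements. Your direct use of the formula $K^r=x^rN$ for the identities $K^{o(x)\pm1}=K^{\pm1}$ is a shade cleaner than the paper's containment-plus-cardinality argument, and for the final assertion you spell out the cardinality comparison explicitly whereas the paper simply reinvokes the equivalence (a)$\Leftrightarrow$(b) of Theorem~\ref{1}; these are cosmetic differences only.
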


\begin{proof} Since $K^n$ is a conjugacy class, we know by Theorem \ref{1}(b) that $K=xN$ with $N=KK^{-1}=[x,G]\unlhd G$. Thus, $K^r=x^rN$ and $|K^r|=|N|=|K|$ for all $r \in \mathbb{N}$. Furthermore, if $s=o(x)$, then $K=x^sK\subseteq K^{s+1}$, so $K^{s+1}=K$. Analogously, since $x^{-1}=x^{s-1}\in K^{s-1}$ and $|K^{-1}|=|K|=|K^{s-1}|$, we conclude that $K^{s-1}=K^{-1}$. Finally, let $m \in \mathbb{N}$ such that $(m, o(x))=1$, then ${\rm \bf C}_G(x)={\rm \bf C}_G(x^m)$ and $K^m$ is a class by applying Theorem \ref{1}(b).  
\end{proof}

\begin{corollary}\label{C2} Let $G$ be a group and $K=x^G$ with $x\in G$ such that $K^n=K$ for some $n\in \mathbb{N}$ with $n\geq 2$, then:

\begin{enumerate}[label=(\alph*)]
\item $K^{k(n-1)+r}=K^r$ for every $r, k\in \mathbb{N}$. 
\item $K^{n-1}=[x, G]\unlhd G$. 
\item $\pi(o(x))\subseteq \pi(n-1)$ where $\pi(t)$ denotes the set of primes dividing the number $t$.
\end{enumerate}
\end{corollary}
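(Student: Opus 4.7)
The plan is to leverage Theorem~\ref{1}: since $K^n = K$ is itself a conjugacy class, we obtain $N := [x, G] = x^{-1}K \unlhd G$ with $K = xN$. Normality of $N$ yields the clean formula $K^r = (xN)^r = x^r N$ for every $r$, which is the engine of the proof. In particular, the hypothesis $K^n = K$ translates into $x^n N = x N$, i.e., $x^{n-1} \in N$.

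Parts (a) and (b) then fall out with little effort. For (a), I would compute $K^{k(n-1)+r} = x^{k(n-1)+r} N = (x^{n-1})^k x^r N = x^r N = K^r$, using that $(x^{n-1})^k \in N$ and $N$ is normal. For (b), $K^{n-1} = x^{n-1} N = N = [x, G]$, with normality already supplied by Theorem~\ref{1}.

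Part (c) is where the real work lies. Let $d$ denote the order of $xN$ in $G/N$; since $x^{n-1} \in N$ we have $d \mid n-1$, so $\pi(d) \subseteq \pi(n-1)$, and it suffices to prove $\pi(o(x)) \subseteq \pi(d)$. Fix a prime $p$ dividing $o(x)$ and write $o(x) = p^a m$ with $a \geq 1$ and $\gcd(p, m) = 1$. Suppose for contradiction that $p \nmid d$; since $d \mid o(x) = p^a m$, this forces $d \mid m$, and hence $(xN)^m = 1$, i.e., $y := x^m \in N$.

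The contradiction will come from the fact that $K = xN$ is a single conjugacy class, so every element of $xN$ is conjugate to $x$ and therefore has order $o(x)$. The elements $xy^j = x^{1+jm}$ for $j = 0, 1, \ldots, p-1$ all lie in $xN = K$ (since $y^j \in N$), so each must have order $o(x)$. But because $\gcd(m, p) = 1$, the residues $1,\, 1+m,\, \ldots,\, 1+(p-1)m$ cover all of $\mathbb{Z}/p\mathbb{Z}$, so for some $j$ one has $p \mid 1+jm$, and then $x^{1+jm}$ has order at most $o(x)/p < o(x)$, a contradiction. Hence $p \mid d$, finishing (c). The only step requiring genuine insight is this pigeonhole-on-orders argument that converts the coset equation $x^{n-1} \in N$ into a divisibility constraint on the primes of $o(x)$; everything else is bookkeeping handed to us by Theorem~\ref{1}.
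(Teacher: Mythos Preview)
Your proof is correct and follows essentially the same approach as the paper: both invoke Theorem~\ref{1} to get $K = xN$ with $N = [x,G] \unlhd G$, and for (c) both locate an exponent $s$ with $x^s \in K$ and $p \mid s$, then use that every element of $K$ has order $o(x)$ to derive the divisibility constraint. Your packaging via the coset formula $K^r = x^r N$ (already recorded in Corollary~\ref{C1}) and the order $d$ of $xN$ in $G/N$ is a bit more streamlined and actually yields the marginally sharper $\pi(o(x)) = \pi(d)$, but the substance is the same as the paper's argument.
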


\begin{proof}
(a) First, let us see that $K^{k(n-1)+1}=K$ for every $k\in \mathbb{N}$. It follows by induction on $k$. It is given if $k=1$. Let us suppose that $K^{k(n-1)+1}=K$ for some $k\in \mathbb{N}$. Then $K^{(k+1)(n-1)+1}=K^{k(n-1)+n}=K^{k(n-1)}K^n=K^{k(n-1)}K=K^{k(n-1)+1}=K$. In general, for every $k, r \in \mathbb{N}$, we have $$K^{k(n-1)+r}=K^{k(n-1)+1+r-1}=KK^{r-1}=K^r.$$

(b) Since $xK^{n-1}\subseteq K^n=K$, we have $|xK^{n-1}|\leq |K|$. We also know that $|K|\leq |xK^{n-1}|$, so $xK^{n-1}=K$. On the other hand, by applying Theorem \ref{1}(b), we obtain $K=x[x, G]$, so $K^{n-1}=[x, G]$.\\

(c) By (a), we know that $K^{k(n-1)+1}=K$ for every $k\in \mathbb{N}$. As a consequence, $o(x)=o(x^{k(n-1)+1})$ for every $k\in \mathbb{N}$. Let $p$ be a prime such that $(p, n-1)=1$. We can find $k$ with $1\leq k<p$ such that $n-1\equiv k\, \, ($mod $p)$. Since $\mathbb{Z}_p$ is a field, there exists $t\in \mathbb{Z}^+$ such that $tk \equiv -1\, \, ($mod $p)$. In fact, $t$ can be taken such that $1\leq t< p$. Now $$t(n-1)\equiv tk\equiv -1\, \,({\rm mod}\, \,p),$$ that is, $t(n-1)+1\equiv 0$ (mod $p)$. Since $o(x)=o(x^{t(n-1)+1})$, we have $(t(n-1)+1, o(x))=1$, and this implies that $p$ does not divide $o(x)$.
\end{proof}

\begin{remark} With the notation of Corollary \ref{C2}, observe that $K^2=K$ cannot happen. Otherwise, by Theorem \ref{1}(b), we have $K=xN$, so $x^2N=xN$, and hence $x\in N$, that is, $K=N$, a contradiction. 
\end{remark}

Let us see an example in which $K^3=D$ with $D\neq K$.

\begin{example} Let $G=\langle a\rangle \rtimes \langle b\rangle$ with $\langle a\rangle  \cong \mathbb{Z}_3$ and $\langle b\rangle \cong \mathbb{Z}_4$. Let $K=b^G$, then $K^3=D\neq K$ and $|K|=3$.
\end{example}

\begin{corollary}\label{C3} Let $G$ be a group and let $\pi$ be a set of primes.  Suppose that for each conjugacy class $K$ of $\pi$-elements of $G$ there exists $n\in \mathbb{N}$ with $n\geq 2$ such that $K^n$ is a conjugacy class . Then $G/${\rm \textbf{O}}$_{\pi'}(G)$ is nilpotent. In particular, if $\pi=\pi(G)$, then $G$ is nilpotent.
\end{corollary}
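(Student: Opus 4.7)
The plan is to reduce to the case $\mathbf{O}_{\pi'}(G) = 1$ and then prove $G$ is nilpotent by induction on $|G|$. First I would verify that the hypothesis descends to $G/\mathbf{O}_{\pi'}(G)$: any $\pi$-element of the quotient lifts to a $\pi$-element of $G$ via the coprime decomposition of an arbitrary preimage (the $\pi'$-part lies in $\mathbf{O}_{\pi'}(G)$), and if $K^n$ is a single conjugacy class of $G$ then $\overline{K}^n = \overline{K^n}$ is a single class of the quotient. Under this reduction, Theorem~\ref{1} applied to each class of $\pi$-elements yields $[x,G] \triangleleft G$ and $x^G = x[x,G]$ for every $\pi$-element $x$ of $G$.

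Pick a minimal normal subgroup $N$ of $G$. Since $\mathbf{O}_{\pi'}(G) = 1$, $N$ is not a $\pi'$-group and hence contains a non-trivial $\pi$-element. The key step is to show $N \leq \mathbf{Z}(G)$, and I would argue by cases. Suppose first that $N$ is elementary abelian and some $x \in N$ lies outside $\mathbf{Z}(G)$: then $[x,G] \leq N$ is normal in $G$, and the minimality of $N$ forces $[x,G] = 1$ or $[x,G] = N$; the former contradicts $x \notin \mathbf{Z}(G)$, while the latter gives $x^G = xN = N \ni 1$, contradicting $x \neq 1$. Suppose instead that $N$ is non-abelian and $x \in N$ is a non-trivial $\pi$-element: the case $[x,G] = N$ is excluded by the same $1 \in xN$ argument, so $[x,G] = 1$, putting $x$ in the normal subgroup $\mathbf{Z}(G) \cap N$, which by minimality must be $N$ (contradicting $N$ non-abelian) or trivial (contradicting $x \neq 1$). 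Hence $N \leq \mathbf{Z}(G)$, and $N$ must be an elementary abelian $p$-subgroup for some $p \in \pi$.

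To conclude the induction, the hypothesis also passes to $G/N$ by the same lifting argument. I would next confirm $\mathbf{O}_{\pi'}(G/N) = 1$: if $M \triangleleft G$ is the preimage of $\mathbf{O}_{\pi'}(G/N)$, then $M$ is an extension of the $\pi'$-group $M/N$ by the central $\pi$-group $N$, so Schur--Zassenhaus yields $M = N \times L$ with $L$ a characteristic Hall $\pi'$-subgroup of $M$; thus $L \triangleleft G$ and $L \leq \mathbf{O}_{\pi'}(G) = 1$. Induction then gives $G/N$ nilpotent, and $N \leq \mathbf{Z}(G)$ forces $G$ itself nilpotent. The ``in particular'' clause is immediate because $\mathbf{O}_{\pi'}(G) = 1$ when $\pi = \pi(G)$. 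The main obstacle I expect is the non-central case for a minimal normal $N$: exploiting that $xN = N$ whenever $x \in N$ is the crucial trick, turning $1 \in N = x^G$ into the required contradiction.
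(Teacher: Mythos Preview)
Your proof is correct. The paper does not give its own argument here, deferring instead to the proof of Corollary~E in \cite{Nuestro5}, so a direct line-by-line comparison is not possible from this source alone. Your approach---reducing to $\mathbf{O}_{\pi'}(G)=1$, invoking Theorem~\ref{1} to get $[x,G]\trianglelefteq G$ for every $\pi$-element $x$, playing this against a minimal normal subgroup $N$ (where the key observation $x\in N$ and $[x,G]=N$ force $x^G=xN=N\ni 1$) to obtain $N\leq\mathbf{Z}(G)$, and then closing by induction after checking $\mathbf{O}_{\pi'}(G/N)=1$ via Schur--Zassenhaus---is a clean and self-contained route that is very likely close in spirit to the cited argument, since the only structural input available from the hypothesis is precisely the normality of $[x,G]$ coming from Theorem~\ref{1}.
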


\begin{proof}
Analogous to the proof of Corollary E of \cite{Nuestro5}.
\end{proof}

\begin{remark} Following Remark \ref{RemCom} we have that if $[x, G]=\{{[x,g]\, | \, g \in G}\}$ for each $\pi$-element $x$ of $G$, by Corollary \ref{C3},  $G/${\rm \textbf{O}}$_{\pi'}(G)$ is nilpotent. In particular, if $\pi=\pi(G)$, then $G$ is nilpotent.
\end{remark}

The following result, which does not require the CFSG, will be useful for our purposes. 

\begin{theorem}\label{G2b}{{\rm (Theorem 3.2(c) of \cite{GuralnickNavarro})}}
Let $G$ be a finite group and let $N$ be a normal subgroup of $G$. Let $x\in G$ be such that all elements of $xN$ are conjugate in $G$. If $x$ is a $p$-element for a prime $p$, then $N$ has a normal $p$-complement.
\end{theorem}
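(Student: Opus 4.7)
The plan is to exploit that $xN\subseteq x^G$ forces every element of $xN$ to be a $p$-element (as a $G$-conjugate of the $p$-element $x$), and then to derive the existence of a normal $p$-complement from the action of $\langle x\rangle$ on $N$.

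First, I would replace $G$ by $H:=\langle x\rangle N$, so that $N\unlhd H$ and the coset $xN\subseteq H$ consists entirely of $p$-elements; this is enough, since a normal $p$-complement of $N$ inside $H$ is characteristic in $N$ and hence normal in $G$. Next I would reduce to the case $\mathbf{O}_{p'}(N)=1$ by passing to $H/\mathbf{O}_{p'}(N)$: the hypothesis descends to the quotient (the image of $x$ is a $p$-element and the image of $xN$ still consists of $p$-elements), and a normal $p$-complement of $N/\mathbf{O}_{p'}(N)$ lifts to a $p'$-subgroup normal in $N$, because an extension of a $p'$-group by a $p'$-group is a $p'$-group. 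The target becomes to show that $N$ is itself a $p$-group.

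Suppose for contradiction that $y\in N$ is a non-trivial $p'$-element. Then $xy\in xN$ is a $p$-element of some order $p^a$, and the expansion
\[
1=(xy)^{p^a}=x^{p^a}\cdot y^{x^{p^a-1}}y^{x^{p^a-2}}\cdots y^{x}\cdot y
\]
yields a nontrivial relation among $\langle x\rangle$-conjugates of $y$ inside $N$; choosing $p^a$ to be a multiple of $o(x)$ kills the factor $x^{p^a}$ and confines the relation entirely to $N$. My strategy is to use such relations, as $y$ ranges over the $p'$-elements of $N$, to produce a non-trivial $\langle x\rangle$-invariant $p'$-subgroup of $N$: any such subgroup is normal in $H$, and its $G$-normal closure inside $N$ is then a non-trivial normal $p'$-subgroup of $N$, contradicting $\mathbf{O}_{p'}(N)=1$.

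The main obstacle is producing this $\langle x\rangle$-invariant $p'$-subgroup from the commutator relations alone. I expect to proceed prime-by-prime: for each prime $q\neq p$ dividing $|N|$, analyze the Sylow $q$-subgroups of $N$ and use the fact that $xy$ is not merely a $p$-element but actually $G$-conjugate to $x$ (in particular, of the same order as $x$) to force $\langle x\rangle$ to normalize some non-trivial $q$-subgroup. Alternatively, a Baer--Suzuki-type argument applied to the $p$-element structure of $xN$ may directly yield the invariance; this is the delicate step where the full strength of the hypothesis $xN\subseteq x^G$, rather than just the weaker statement ``$xN$ consists of $p$-elements,'' is needed.
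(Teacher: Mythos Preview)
The paper does not prove this statement: it is quoted from Guralnick--Navarro (their Theorem~3.2(c)) and invoked as a black box, so there is no proof in the paper to compare your attempt against.

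On its own merits, your proposal is an outline with an explicitly unresolved gap rather than a proof. The reductions to $H=\langle x\rangle N$ and to $\mathbf{O}_{p'}(N)=1$ are legitimate, and it is correct that every element of $xN$ is a $p$-element. But the decisive step---producing a nontrivial $\langle x\rangle$-invariant $p'$-subgroup of $N$, equivalently showing that $N$ is a $p$-group after the reductions---is precisely what you yourself label ``the main obstacle'' and leave undone. The identity $(xy)^{p^a}=1$ yields a relation among $\langle x\rangle$-conjugates of $y$, but you give no mechanism for extracting a normal $p'$-subgroup from such relations; ``prime-by-prime'' and ``a Baer--Suzuki-type argument'' remain programmatic suggestions, not arguments. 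There is also an internal inconsistency you should confront. After passing to $H=\langle x\rangle N$ you retain only the weaker hypothesis that $xN$ consists of $p$-elements, since elements of $xN$ need not be $H$-conjugate (for instance, take $G=D_8$, $N=Z(G)$, and $x$ an element of order~$4$: then $xN=x^G$, but $H=\langle x\rangle$ is abelian, so $r$ and $r^{-1}$ are not $H$-conjugate). Yet your final paragraph insists that ``the full strength of the hypothesis $xN\subseteq x^G$'' is what will be needed at the hard step. Either the weaker hypothesis already suffices---in which case you must actually demonstrate how---or it does not, in which case your very first reduction has discarded exactly the information you say you require.
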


Now we see some particular cases in which the solvability of $\langle K\rangle$ where $K$ is a conjugacy class such that $K^n$ is a class for some $n \in \mathbb{N}$ and $n\geq 2$ can be obtained without using the CFSG. First, we add conditions about the order of the elements of $K$ and later we study the particular case when $K^n$ is a real class.

\begin{theorem}\label{NOCFSG} Let $K$ be a conjugacy class of an element $x$ of a group $G$. Suppose that there exists $n\in \mathbb{N}$ with $n\geq 2$ satisfying that $K^n$ is a conjugacy class. Then:
\begin{enumerate}
\item If $o(x)$ is a prime, then $\langle K\rangle$ is solvable.
\item If $x$ is a $2$-element, then $\langle K\rangle$ is solvable. 
\end{enumerate}
\end{theorem}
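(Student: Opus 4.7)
The plan is to combine Theorem \ref{1} with the normal complement result Theorem \ref{G2b}, and then eliminate the complement using either the Feit--Thompson odd order theorem or Thompson's fixed-point-free automorphism theorem, both of which predate and are independent of the CFSG.

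Set $N = [x,G]$. By Theorem \ref{1}, $K = xN$, $N \trianglelefteq G$, and every element of the coset $xN$ is $G$-conjugate to $x$; in particular, each element of $xN$ has the same order as $x$, and the same prime divisors in its order. Using Corollary \ref{C1}, $K^{-1} = x^{-1}N$ is also contained in $\langle K\rangle$, so the product $(xN)(x^{-1}N) = N$ lies in $\langle K\rangle$, and thus $\langle K\rangle = \langle x\rangle N$. To prove solvability of $\langle K\rangle$ it suffices to show $N$ is solvable.

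For part (2), $x$ is a $2$-element, so every element of $xN$ is a $2$-element. Theorem \ref{G2b} applied with $p = 2$ gives a normal $2$-complement $M$ of $N$. Since $M$ has odd order, the Feit--Thompson theorem yields that $M$ is solvable; as $N/M$ is a $2$-group, $N$ itself is solvable, and hence so is $\langle K\rangle = \langle x\rangle N$.

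For part (1), write $p = o(x)$. If $p = 2$ we fall under part (2), so assume $p$ is odd. Every element of $xN$ has prime order $p$, so Theorem \ref{G2b} again supplies a normal $p$-complement $M$ of $N$, a priori just a $p'$-group. The crucial step is the computation $C_M(x) = 1$: for $m \in M$ centralising $x$, the element $xm \in xN$ has order $p$ and commutes with both factors, whence $1 = (xm)^p = x^p m^p = m^p$; since $o(m)$ is coprime to $p$, this forces $m = 1$. Thus $x$ induces a fixed-point-free automorphism of prime order $p$ on $M$, and Thompson's theorem on fixed-point-free automorphisms of prime order implies $M$ is nilpotent, hence solvable. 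Combined with $N/M$ being a $p$-group, $N$ is solvable, and so is $\langle K\rangle$.

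The main conceptual point (rather than an obstacle) is recognising that the hypothesis makes every element of $xN$ a $p$-element, so that Theorem \ref{G2b} is applicable, and that the same hypothesis then cleanly forces $C_M(x) = 1$ via the $p$-th power computation. Once this is in place, the classical Feit--Thompson and Thompson theorems finish the argument without invoking the CFSG.
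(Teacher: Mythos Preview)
Your proof is correct and follows essentially the same route as the paper: Theorem~\ref{1} gives $K=xN$ with $N\unlhd G$, Theorem~\ref{G2b} produces a normal $p$-complement in $N$, and then Feit--Thompson (for $p=2$) or Thompson's fixed-point-free automorphism theorem (for $o(x)=p$) finishes. The only cosmetic difference is in part~(1): the paper first shows that $\mathbf{C}_N(x)$ is a $p$-group (since any $x^{-1}x^g$ centralising $x$ is a product of two commuting elements of order~$p$) and then intersects with the $p'$-complement, whereas you argue directly that $m\in C_M(x)$ forces $(xm)^p=m^p=1$; these are the same computation viewed from opposite ends.
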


\begin{proof}
By Theorem \ref{1}(b) we have $K=xN$ with $N=KK^{-1}=[x, G]\unlhd G$. Let us prove (1). Suppose $x$ is of prime order $p$, we show that ${\rm \bf C}_N(x)$ is a $p$-group. Since $N=x^{-1}K$, if we take some element $x^{-1}x^g\in {\rm \bf C}_N(x)$, then $x\in {\rm \bf C}_G(x^g)$. Thus, $o(x^{-1}x^g)$ divides the least common multiple of $o(x^{-1})$ and $o(x^g)$, so all non-trivial elements of ${\rm \bf C}_N(x)$ have order $p$. In particular, ${\rm \bf C}_N(x)$ is a $p$-group, as wanted. Now, all elements of $xN$ are $G$-conjugate, so $N$ has a normal $p$-complement by Theorem \ref{G2b}. We write $N=P_0L$ with $P_0$ a $p$-group and $L\unlhd N$ a $p'$-group. Since ${\rm \bf C}_L(x)\subseteq {\rm \bf C}_N(x)$ and ${\rm \bf C}_N(x)$ is a $p$-group, we conclude that ${\rm \bf C}_L(x)=1$ and since $o(x)=p$, we deduce that $L$ is nilpotent by Thompson's Lemma (see for instance Theorem 2.1 in Chapter 10 of \cite{Goreinstein}). As a result, $N$ is solvable and $\langle K\rangle=\langle x\rangle N$ is solvable too.\\

Now, we prove (2). By Theorem \ref{G2b}, $N$ has a normal $2$-complement, and consequently, as $\langle K\rangle/N$ is a 2-group, then $\langle K\rangle$ has a normal 2-complement as well. By Feit-Thompson's Theorem, we conclude that $\langle K\rangle$ is solvable.
\end{proof}

\begin{theorem}\label{NOCFSGR} Let $K$ be a conjugacy class of an element $x$ of a group $G$. Suppose that there exists $n\in \mathbb{N}$ with $n\geq 2$ satisfying that $K^n=D$ where $D$ is a real conjugacy class, then $\langle K\rangle$ is solvable. Also, $D^3=D$ and $D$ is a class of a $2$-element. In particular, 
\begin{enumerate}
\item [(a)] If $n=2^a$ for some $a\in \mathbb{N}$, then $|K|$ is odd and $o(x)=2^{a+1}$.
\item [(b)] If $D=K$, then $x$ is a $2$-element and $K^m=K$ for every odd number $m$. Also, $K^2=[x,G]\unlhd G$.
\end{enumerate}
\end{theorem}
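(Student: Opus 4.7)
My plan is to apply Theorem \ref{1} for structure, derive $D^3=D$ from the realness of $D$, and then invoke Corollary \ref{C2} on $D$ to obtain that $x^n$ is a $2$-element. By Theorem \ref{1}, $K=xN$ with $N=[x,G]\unlhd G$, $\mathbf{C}_G(x)=\mathbf{C}_G(x^n)$ and $D=x^nN$. Since $D=D^{-1}$, the cosets $x^nN$ and $x^{-n}N$ agree, hence $x^{2n}\in N$, and therefore $D^3=x^{3n}N=x^n(x^{2n})N=x^nN=D$. Now Corollary \ref{C2} applied to the class $D$ with the role of $n$ played by $3$ gives, via part (c), $\pi(o(x^n))\subseteq\pi(2)=\{2\}$, so $x^n$ is a $2$-element and $D$ is a class of a $2$-element; part (b) also supplies $D^2=[x^n,G]\unlhd G$.

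For the solvability of $\langle K\rangle$ I would apply Theorem \ref{NOCFSG}(2) to the class $D$: it is a class of the $2$-element $x^n$ and $D^3$ is a class, so $\langle D\rangle$ is solvable. The bridge back to $K$ is the inclusion $N\subseteq\langle D\rangle$: for each $z\in N$, $x^nz\in x^nN=D$ and $x^{-n}=(x^n)^{-1}\in D^{-1}=D$, so $z=x^{-n}(x^nz)\in D\cdot D\subseteq\langle D\rangle$. Hence $N$ is solvable, and since $\langle K\rangle/N=\langle xN\rangle$ is cyclic, $\langle K\rangle$ itself is solvable. Part (b) then follows almost immediately: if $D=K$ then $K^3=D^3=D=K$, so Corollary \ref{C2} applied to $K$ with $n=3$ delivers all three assertions simultaneously --- from (c), $\pi(o(x))\subseteq\pi(2)=\{2\}$, so $x$ is a $2$-element; from (a), $K^{2k+1}=K^1=K$ for every $k$, i.e., $K^m=K$ for every odd $m$; and from (b), $K^{3-1}=K^2=[x,G]\unlhd G$.

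The main obstacle is part (a). Once $n=2^a$, the identity $o(x)=o(x^n)\gcd(o(x),n)$ forces $o(x)$ to be a power of $2$, so $x$ is itself a $2$-element, and Theorem \ref{G2b} produces a normal $2$-complement $L$ of $N$, so that $N=PL$ with $P$ a Sylow $2$-subgroup. The delicate point is showing $P=1$: I would pass to $\bar G=G/L$, where $\langle K\rangle/L$ is a $2$-group and $\bar D$ is still real, and exploit the $2$-group structure to force the image $\bar K$ to be a singleton --- so $N\subseteq L$ and hence $N=L$ has odd order, giving $|K|=|N|$ odd. Once $|N|$ is odd, $\langle x\rangle\cap N=1$ (since $\langle x\rangle$ is a $2$-group), so $x^{2n}\in N$ upgrades to $x^{2n}=1$, whence $o(x)\mid 2^{a+1}$; assuming $D\neq\{1\}$ (equivalently $x^n\neq 1$), the bound $o(x)>n=2^a$ pins down $o(x)=2^{a+1}$.
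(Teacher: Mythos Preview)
Your argument for the main assertions and for part (b) is correct, and in fact your use of the coset description $K=xN$, $D=x^nN$ makes the derivation of $D^3=D$ and of $N\subseteq D^2\subseteq\langle D\rangle$ a bit more transparent than the paper's route (which first establishes $K^{2n+1}=K$ via manipulations with $K^{-1}=K^{2n-1}$, and only then deduces $D^3=D$ and $N=K^{2n}=D^2$). Both approaches feed into Corollary~\ref{C2} and Theorem~\ref{NOCFSG}(2) in the same way, so the difference is cosmetic.

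The genuine gap is in part (a). Your reduction to $\bar G=G/L$ is fine, and after it you are exactly in the situation where $\bar N=[\bar x,\bar G]$ is a normal $2$-subgroup, $\bar K=\bar x\bar N$ with $|\bar K|=|\bar N|$ a power of $2$, and $\bar K^n=\bar D$ real. But the sentence ``exploit the 2-group structure to force the image $\bar K$ to be a singleton'' is not an argument: nothing you have written distinguishes $|\bar N|=1$ from $|\bar N|=2^b$ with $b\geq 1$, and an inductive appeal to part (a) itself would be circular (you would need $|\bar K|$ odd, which is the very conclusion). Some concrete mechanism is required to rule out $|\bar N|$ even.

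The paper supplies such a mechanism directly in $G$, without passing to $G/L$. Assuming $|K|=|N|$ is even, one decomposes $N=\{1\}\cup D_1\cup\cdots\cup D_m$ into $G$-classes; parity forces an odd number of odd-size $D_i$, and since $N=N^{-1}$ these pair off under inversion, leaving at least one \emph{real} class $D_k$ of odd size. A real element of odd class size must be an involution (else the extended centraliser would have index $|D_k|/2$). Now $|D_k|$ odd means $\mathbf{C}_G(t)$ contains a Sylow $2$-subgroup for $t\in D_k$; since $x$ is a $2$-element (here $n=2^a$ is used), after conjugating one may arrange $x\in\mathbf{C}_G(t^s)$ and write $t^s=x^{-1}x^g$. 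Then $x$ and $x^g$ commute and $(x^{-1}x^g)^2=1$ yields $g\in\mathbf{C}_G(x^2)\subseteq\mathbf{C}_G(x^n)=\mathbf{C}_G(x)$, contradicting $t\neq 1$. This involution argument is the missing ingredient; your final paragraph (deriving $o(x)=2^{a+1}$ once $|N|$ is odd) is then correct.
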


\begin{proof}
We have $$K^n=D=D^{-1}=(K^n)^{-1}=(K^{-1})^{n}.$$ By Corollary \ref{C1}, we get $|K|=|K^n|=|D|$ and we conclude that $x^{n-1}K=K^n$. Hence, $$K=(x^{n-1})^{-1}K^n=(x^{n-1})^{-1}(K^{-1})^n=(x^{-1})^{n-1}(K^{-1})^{n}\subseteq (K^{-1})^{2n-1}.$$ By applying Corollary \ref{C1} to $K^{-1}$, we obtain $K=(K^{-1})^{2n-1}$. Thus, $K^{-1}=K^{2n-1}$. We have $K\subseteq KKK^{-1}=K^2K^{2n-1}$ and as $|K| = |K^{2n+1}|$, by Corollary \ref{C1}, it follows that $K^{2n+1}=K$. Thus, by multiplying both sides by $K^{n-1}$, we obtain $K^{3n}=K^n$, so $D^3=D$ and $D$ is a conjugacy class of a $2$-element by Corollary \ref{C2}(c). By Theorem \ref{NOCFSG} (2) we have that $\langle D\rangle$ is solvable. Since $K^{2n+1}=K$, we know by Corollary \ref{C2}(b) that $K^{2n}=D^2=N=[x, G]$, so $N$ is solvable. By Theorem \ref{1}(b) we have $K=xN$ with $N=[x, G]\unlhd G$, so $\langle K\rangle$ is solvable.\\

Let us prove the particular case (a). Suppose that $|K|=|D|$ is odd. Then $D$ is a real class of odd size and then $o(x^n)=2$. This means that $o(x)=2^{a+1}$ and the result is proved. Assume that $|K|=|D|$ is even and we are going to get a contradiction. By Theorem \ref{1}, $N=KK^{-1}=x^{-1}K \unlhd G$ and we write $$x^{-1}K=\{1\} \cup D_1\cup \cdots \cup D_m$$ where each $D_i$ is the conjugacy class of an element $x^{-1}x^g\neq 1$ for some $g \in G$. Since $|x^{-1}K|=|K|$ is even, there exists in $x^{-1}K$, at least one conjugacy class of odd size. Let $\Omega=\{D_i\, | \, |D_i|\, \, {\rm odd}\}$ and we necessarily have that $|\Omega|$ is odd. Thus, there exists a real conjugacy class $D_k$ with $k \in \{1, \cdots, m\}$ such that $|D_k|$ is odd, that is, $D_k$ is a non-trivial conjugacy class of elements of order 2. We write $D_k=t^G$. Since $|D_k|$ is odd, then ${\rm \bf C}_G(t)$ contains a Sylow $2$-subgroup of $G$. Observe that $x^n$ is a 2-element by the previous part. Consequently, $x$ is a 2-element. By taking conjugates, we can suppose that $\langle x\rangle \subseteq P \subseteq {\rm \bf C}_G(t^s)$ for some $P \in {\rm Syl}_2(G)$ and some $s \in G$. We write $t^s=x^{-1}x^g$ for some $g\in G$. Observe that $x^g \neq x$ because $t\neq 1$. We have $x \in {\rm \bf C}_G(x^{-1}x^g)$, so $x$ and $x^g$ commute. Since $o(x^{-1}x^g)=2$, then $(x^2)^g=x^2$, so $g \in {\rm \bf C}_G(x^2)$. On the other hand, as $|K^n|=|K|$, we have ${\rm \bf C}_G(x^n)={\rm \bf C}_G(x)$, and since ${\rm \bf C}_G(x^2)\subseteq {\rm \bf C}_G(x^n)$, this leads to $t=1$, a contradiction.\\

Finally, (b) directly follows from the first part of this theorem. 
\end{proof}

\begin{remark}
Observe that if a real conjugacy class $K$ satisfies that there exists $n\in \mathbb{N}$ such that $K^n=D$ where $D$ is a conjugacy class, then $D$ is also a real class. However, if a class $K$ satisfies that there exists $n\in \mathbb{N}$ such that $K^n=D$ and $D$ is real, then $K$ need not be real. A trivial example of this situation occurs in $\mathbb{Z}_4$. We have studied this case in the previous theorem. 
\end{remark}

We use the following result appearing in \cite{GuralnickNavarro} to obtain the solvability of the subgroup generated by a conjugacy class satisfying the conditions of Theorem \ref{1}, and consequently, to prove the solvability part of Theorem A. The CFSG is required. 

\begin{theorem}\label{G2a}{{\rm (Theorem 3.2(a) of \cite{GuralnickNavarro})}}
Let $G$ be a finite group and let $N$ be a normal subgroup of $G$. Let $x\in G$ be such that all elements of $xN$ are conjugate in $G$. Then $N$ is solvable. 
\end{theorem}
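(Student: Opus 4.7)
The plan is to argue by minimal counterexample on $|N|$, reducing to the case where $N$ is a non-abelian minimal normal subgroup of $G$ and then invoking the CFSG at the end. Let $(G,N,x)$ be a counterexample with $|N|$ minimal: $N\trianglelefteq G$ is non-solvable, yet every element of $xN$ is $G$-conjugate to $x$.

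First I would reduce to $N$ being a minimal normal subgroup of $G$. If $1<L<N$ with $L\trianglelefteq G$, then the hypothesis descends to $(G/L,N/L,xL)$, so by minimality $N/L$ is solvable; and the hypothesis also holds for $(G,L,x)$ since $xL\subseteq xN$, so minimality forces $L$ to be non-solvable, contradicting $L<N$. Hence $N$ is a minimal normal subgroup of $G$, and since it is non-solvable, $N=S_1\times\cdots\times S_k$ with each $S_i\cong S$ a non-abelian finite simple group. Passing to $G/{\rm \bf C}_G(N)$ (which inherits the hypothesis with non-solvable image of $N$, since ${\rm \bf C}_G(N)\cap N=1$), one may further assume ${\rm \bf C}_G(N)=1$, so that $G$ embeds into ${\rm Aut}(N)={\rm Aut}(S)\wr{\rm Sym}(k)$.

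Next I would analyse how $x$ acts on $N=S^{k}$. Conjugation by $x$ permutes the factors $\{S_1,\ldots,S_k\}$, and by collecting into $\langle x\rangle$-orbits and exploiting the $G$-transitivity on factors coming from minimal normality, one may reduce to the case where $x$ acts on the factors as a single $k$-cycle; a further conjugation within ${\rm Aut}(N)$ normalizes $x$ to the form $(\phi,1,\ldots,1)\pi$, with $\pi$ a $k$-cycle and $\phi\in{\rm Aut}(S)$. The hypothesis \emph{every $n\in N$ equals $[x,g]$ for some $g\in G$} unfolds, using the commutator identity $[x,ab]=[x,b][x,a]^{b}$ and the wreath-product coordinates, into the statement that the twisted commutator map $s\mapsto s^{-1}\phi(s)$ is surjective on the single simple factor $S$. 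Since $S$ is finite, surjectivity is equivalent to injectivity, which is equivalent to $\phi$ being a fixed-point-free automorphism of $S$.

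The final step invokes the CFSG-based theorem of Rowley (1995): no non-abelian finite simple group admits a fixed-point-free automorphism. This contradicts the existence of $\phi$, completing the proof. The main obstacle is exactly this classification-theoretic input; all the preceding reductions are standard, but without the CFSG one can only handle fpf automorphisms of prime order via Thompson's theorem (which by itself would force the kernel of a putative Frobenius group to be nilpotent, incompatible with $S$ simple non-abelian). The general case truly requires the classification, which is why Theorem \ref{G2a} is a CFSG result.
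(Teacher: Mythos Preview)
The paper does not give its own proof of this statement: Theorem~\ref{G2a} is quoted verbatim from Guralnick--Navarro and used as a black box, so there is no in-paper argument to compare against. What follows is an assessment of your sketch on its own merits.

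Your overall architecture (minimal counterexample, reduce to $N$ a non-abelian minimal normal subgroup, pass to $G/{\rm \bf C}_G(N)$, finish with a CFSG fact) is standard and sound. The difficulty lies in the two reduction steps you pass over quickly, and both have genuine gaps.

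\emph{First gap: the single $k$-cycle reduction.} Minimal normality gives $G$-transitivity on the simple factors, but it does not let you alter the cycle structure of the specific element $x$. To ``collect into $\langle x\rangle$-orbits'' you would need to replace $G$ by $N\langle x\rangle$ (so that the $\langle x\rangle$-orbits become blocks), but the hypothesis ``every element of $xN$ is $G$-conjugate to $x$'' does \emph{not} descend to $N\langle x\rangle$-conjugacy unless you already know ${\rm \bf C}_N(x)=1$, since $x^{N\langle x\rangle}=x^{N}$ has size $|N:{\rm \bf C}_N(x)|$. That is precisely the fixed-point-free condition you are trying to reach, so the reduction is circular.

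\emph{Second gap: from $[x,G]=N$ to the twisted map on $S$ being surjective.} Even in the almost-simple case $k=1$, the hypothesis says only that $g\mapsto[x,g]$ surjects $G$ onto $N=S$; it does not say that the restriction $s\mapsto[x,s]=\phi^{-1}(s)^{-1}s$ surjects $S$ onto $S$. The latter is equivalent to ${\rm \bf C}_S(x)=1$, whereas the former yields only $|{\rm \bf C}_G(x)|\le |G:S|$, hence $|{\rm \bf C}_S(x)|\le |{\rm Out}(S)|$. This is a strong constraint, but it is not ${\rm \bf C}_S(x)=1$, and the commutator identity $[x,ab]=[x,b][x,a]^{b}$ does not bridge the difference: the elements $g\in G\setminus N$ genuinely contribute commutators that need not arise from $g\in S$.

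So Rowley's theorem on fixed-point-free automorphisms is a legitimate CFSG endpoint, but your bridge to it is incomplete. The Guralnick--Navarro argument handles the almost-simple case by a different route (analysing cosets in almost simple groups directly via CFSG), not by reducing to a fixed-point-free automorphism.
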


In the next result the solvability in Theorem A is obtained by means of the CFSG. In fact, Theorems \ref{1} and \ref{S1} are extensions of some parts of Theorem A of \cite{GuralnickNavarro}, in which the authors prove similar results for the square of a conjugacy class.

\begin{theorem}\label{S1} Let $K$ be a conjugacy class of a group $G$ such that there exists $n\in \mathbb{N}$ satisfying that $K^n$ is a conjugacy class. Then $\langle K\rangle$ is solvable.
\end{theorem}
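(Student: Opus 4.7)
The plan is to reduce the statement to Theorem \ref{G2a}, which provides exactly the piece of CFSG-based input we need. By Theorem \ref{1}(b), the hypothesis that $K^n$ is a conjugacy class gives us the structural identity $K = xN$, where $N = KK^{-1} = [x,G]$ is a normal subgroup of $G$. This is the key bridge: it turns the multiplicative hypothesis on $K$ into a statement about a single coset.

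Having obtained $K = xN$, the fact that $K$ is one conjugacy class of $G$ means precisely that every element of the coset $xN$ is $G$-conjugate to $x$. This is exactly the hypothesis of Theorem \ref{G2a}, which we then apply to conclude that $N$ is solvable. This is the step where CFSG enters (through the cited Guralnick--Navarro result), and it is essentially the only non-elementary ingredient needed.

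To finish, I would observe that $\langle K\rangle = \langle x\rangle N$: indeed $K = xN \subseteq \langle x\rangle N$, and conversely $x = x\cdot 1 \in xN = K$ while $N = KK^{-1} \subseteq \langle K\rangle$. Since $N$ is solvable by the previous step and $\langle K\rangle / N$ is a homomorphic image of the cyclic group $\langle x\rangle$, the quotient is abelian, so $\langle K\rangle$ is an extension of a solvable group by an abelian group and is therefore solvable.

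The main (and only) obstacle is really hidden inside Theorem \ref{G2a}: the CFSG-dependent assertion that a normal subgroup all of whose elements in some coset $xN$ are $G$-conjugate must be solvable. Once that is invoked, the remainder of the argument is formal. This is also why the authors separate out Section~2's earlier statements (Theorems \ref{NOCFSG} and \ref{NOCFSGR}), which handle the prime-order, $2$-element, and real-class cases without needing this deep input.
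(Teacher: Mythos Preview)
Your proof is correct and follows essentially the same route as the paper: invoke Theorem~\ref{1}(b) to write $K = xN$ with $N = [x,G] \unlhd G$, apply Theorem~\ref{G2a} to conclude $N$ is solvable, and deduce that $\langle K\rangle = \langle x\rangle N$ is solvable. You simply fill in more detail (the verification that $\langle K\rangle = \langle x\rangle N$ and that the quotient is cyclic) than the paper's terse two-line argument.
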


\begin{proof} By Theorem \ref{1}(b), we have $K=xN$ with $N=[x, G]$. Thus, $N$ is solvable by applying Theorem \ref{G2a}. As a consequence, $\langle K\rangle=\langle x\rangle N$ is solvable.
\end{proof}

Next, we obtain a characterization in terms of characters of the fact that the product of $n$ conjugacy classes, for some $n\in \mathbb{N}$, is again a conjugacy class. This extends the case in which the product of two classes is a class (see for instance \cite{MooriViet}) and this is useful to prove Conjecture \ref{Con1} for the sporadic simple groups for some values of $n$. In particular, we obtain such a characterization for the case in which the power of a class is a class and so, the first part of Theorem A. We refer the reader to Chapter 3 of \cite{Isaacs} for a detailed presentation of character and class sums properties. 

\begin{theorem}\label{Char1} Let $K_1,\cdots, K_n $ be conjugacy classes of a group $G$ and write $K_i={x_i}^G$ with $x_i \in G$. Then $K_1\cdots K_n=D$ where $D=d^G$ if and only if $$\chi(x_1)\cdots \chi(x_n)=\chi(1)^{n-1}\chi(d)$$ for all $\chi \in {\rm Irr}(G)$. In particular, if $K$ is a conjugacy class of $G$ and $x\in K$, then $K^n$ is a conjugacy class for some $n\in \mathbb{N}$ if and only if 
\begin{equation}
\chi(x)^n=\chi(1)^{n-1}\chi(x^n)
\end{equation} for all $\chi \in {\rm Irr}(G)$.
\end{theorem}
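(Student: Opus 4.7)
The plan is to work in the center of the group algebra $\mathbb{C}[G]$ via the central characters. Recall that for each $\chi\in\mathrm{Irr}(G)$, the map $\omega_\chi:\mathbf{Z}(\mathbb{C}[G])\to\mathbb{C}$ defined on class sums by
$$\omega_\chi(\widehat{K})=\frac{|K|\chi(x)}{\chi(1)}\quad(x\in K)$$
is an algebra homomorphism. I will use this and the second orthogonality relation.

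For the forward direction, suppose $K_1\cdots K_n=D$ as sets. Since each $y\in D$ has the same number of factorizations $y=y_1\cdots y_n$ with $y_i\in K_i$ (by $G$-conjugacy), we get the class-sum identity
$$\widehat{K_1}\cdots\widehat{K_n}=\frac{|K_1|\cdots|K_n|}{|D|}\,\widehat{D}.$$
Applying $\omega_\chi$ to both sides, using multiplicativity on the left and the definition on the right, and then cancelling the factor $|K_1|\cdots|K_n|/\chi(1)$, yields the desired identity $\chi(x_1)\cdots\chi(x_n)=\chi(1)^{n-1}\chi(d)$.

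For the converse, let $N(z)$ denote the number of tuples $(y_1,\ldots,y_n)\in K_1\times\cdots\times K_n$ with $y_1\cdots y_n=z$. The standard class algebra constants formula, extended inductively from the two-class version, gives
$$N(z)=\frac{|K_1|\cdots|K_n|}{|G|}\sum_{\chi\in\mathrm{Irr}(G)}\frac{\chi(x_1)\cdots\chi(x_n)\,\overline{\chi(z)}}{\chi(1)^{n-1}}.$$
Plugging in the hypothesis $\chi(x_1)\cdots\chi(x_n)=\chi(1)^{n-1}\chi(d)$ collapses the sum to
$$N(z)=\frac{|K_1|\cdots|K_n|}{|G|}\sum_{\chi\in\mathrm{Irr}(G)}\chi(d)\,\overline{\chi(z)}.$$
By the second orthogonality relation this equals $\frac{|K_1|\cdots|K_n|}{|D|}$ when $z\in D$, and $0$ otherwise. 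Hence $K_1\cdots K_n\subseteq D$, and the positivity of $N(z)$ on $D$ gives the reverse inclusion, so $K_1\cdots K_n=D$.

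Finally, the specialization to a single class $K=x^G$ is immediate: setting $K_1=\cdots=K_n=K$ and noting that if $K^n$ is a conjugacy class then it must be $(x^n)^G$ (since $x^n\in K^n$), we obtain the character identity $\chi(x)^n=\chi(1)^{n-1}\chi(x^n)$ as the criterion. The only non-routine ingredient is the extension of the class algebra constants formula from $n=2$ to general $n$; I expect this to be handled cleanly by induction on $n$, combining the base case with the fact that $\omega_\chi$ is an algebra homomorphism and the inversion of the linear transformation from class sums to irreducible characters via orthogonality. No obstacle beyond this bookkeeping is anticipated.
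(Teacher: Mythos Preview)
Your proof is correct and follows essentially the same route as the paper: both directions rest on the central characters $\omega_\chi$ and the second orthogonality relation. The only cosmetic difference is that for the converse you invoke the $n$-fold class algebra constants formula for $N(z)$ directly, whereas the paper writes out the decomposition $\widehat{K_1}\cdots\widehat{K_n}=\sum_i m_i\widehat{D_i}$, multiplies by $\overline{\chi(d)}$, and sums over $\chi$; these are the same computation.
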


\begin{proof} Let $\chi \in {\rm Irr}(G)$ and let $R$ be an irreducible representation associated to $\chi$. We know that $R$ can be linearly extended to $\mathbb{C}[G]$ and $R(\widehat{K})\in {\rm \bf Z}(\mathbb{C}[G])$ commutes with $R(g)$ for all $g\in G$. We denote by $\widehat{K_i}$ the sum of all elements in $K_i$ in the group algebra $\mathbb{C}[G]$. We know that $$R(\widehat{K_i})=w_{\chi}(\widehat{K_i})I$$ where $$w_{\chi}(\widehat{K_i})=\frac{|K_i|\chi(x_i)}{\chi(1)}$$ and $I$ is the identity matrix.\\

Assume that $K_1\cdots K_n=D$. We write $\widehat{K_1}\cdots \widehat{K_n} =m\widehat{D}$ with $m\in \mathbb{N}$. Thus, the hypothesis implies that \begin{equation*}
R(\widehat{K_1}\cdots \widehat{K_n})=R(\widehat{K_1})\cdots R(\widehat{K_n})=mR(\widehat{D})
\end{equation*}
and
\begin{equation*}
w_{\chi}(\widehat{K_1})\cdots w_{\chi}(\widehat{K_n}) =mw_{\chi}(\widehat{D}).
\end{equation*} 
Consequently, $$\frac{|K_1|\cdots |K_n|\chi(x_1)\cdots \chi(x_n)}{\chi(1)^n}=m\frac{|D|\chi(d)}{\chi(1)}$$ and since $|K_1|\cdots |K_n|=m|D|$, we have $$\chi(x_1)\cdots \chi(x_n)=\chi(1)^{n-1}\chi(d)$$for every $\chi \in {\rm Irr}(G)$.\\

Let us prove the converse. Assume that the equation with characters holds. We know that $K_1\cdots K_n=D_1 \cup \cdots \cup D_r$ with $D_i$ a conjugacy class for all $1\leq i\leq r$. We write $\widehat{K_1}\cdots \widehat{K_n}=m_1\widehat{D_1}+ \cdots +m_r\widehat{D_r}$, where $m_i$ is the multiplicity of $\widehat{D_i}$ in the product. We have $|K_1|\cdots |K_n|=m_1|D_1|+\cdots +m_r|D_r|$. As above,
\begin{equation*}
R(\widehat{K_1}\cdots \widehat{K_n})=R(\widehat{K_1})\cdots R(\widehat{K_n})=m_1R(\widehat{D_1})+\cdots +m_rR(\widehat{D_r})
\end{equation*}
and by hypothesis we know
$$\chi(x_1)\cdots \chi(x_n)=\chi(1)^{n-1}\chi(d)$$ for all $\chi \in {\rm Irr}(G)$.
Thus, 
\begin{equation*}
|K_1|\cdots |K_n|\chi(d)=m_1|D_1|\chi(d_1)+\cdots +m_r|D_r|\chi(d_r)
\end{equation*} 
where $D_i=d_i^G$ and\begin{equation*}
|K_1|\cdots |K_n|\chi(d)\overline{\chi(d)}=m_1|D_1|\chi(d_1)\overline{\chi(d)}+\cdots +m_r|D_r|\chi(d_r)\overline{\chi(d)}.
\end{equation*} 
From this equation we obtain 

$$|K_1|\cdots |K_n|\sum_{\chi \in {\rm Irr}(G)}\chi(d)\overline{\chi(d)}=|K_1|\cdots |K_n||{\rm \bf C}_G(d)|=$$
$$=m_1|D_1|\sum_{\chi \in {\rm Irr}(G)}\chi(d_1)\overline{\chi(d)}+\cdots +m_r|D_r|\sum_{\chi \in {\rm Irr}(G)}\chi(d_r)\overline{\chi(d)}.$$
Then $D_i=D$ for some $i$. Without loss of generality, suppose that $D_1=D$ and we have $m_i=0$ for all $i\neq 1$. This means that $K_1\cdots K_n=D$. 
\end{proof}

\begin{remark} Recall that the extended covering number of a group is the smallest integer $r$ such that the product of $r$ conjugacy classes is the whole group for all classes. In \cite{Zisser}, it is shown that the extended covering number of the sporadic simple groups is at most 7. By using the character tables of the sporadic groups we have checked that for each of them and for each $n$-tuple of conjugacy classes for $n=3, 4, 5, 6$ there is some irreducible character which does not satisfy equation (1) of Theorem \ref{Char1}. The case $n=2$ obviously corresponds to Arad and Herzog's conjecture, which is already known to be satisfied by the sporadic simple groups. Therefore, Conjecture \ref{Con1} holds for the sporadic simple groups. 
\end{remark}

\noindent {\it Proof of Theorem A.} It is a direct consequence of Theorems \ref{S1} and \ref{Char1}.

\bigskip
\section{Powers which are a union of the trivial class and another class}
In this section we study the case in which the power of a conjugacy class is a union of two conjugacy classes one of them being the trivial class. We first prove
a particular case satisfying the conjecture of Arad and Herzog which will also be useful in further proofs.

\begin{lemma}\label{L1}
Let $G$ be a group and $K$, $L$ and $D$ non-trivial conjugacy classes of $G$ such that $KL=D$ with $|D|=|K|$. Then $G$ possesses a solvable proper normal group which is $\langle LL^{-1}\rangle$. 
In particular, $\langle L\rangle$ is solvable.
\end{lemma}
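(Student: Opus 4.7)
The plan is to produce a proper normal subgroup $N$ of $G$ which coincides exactly with $LL^{-1}$, and then invoke Theorem \ref{G2a} to deduce solvability of $N$ because $L$ itself will turn out to be a full coset of $N$ whose elements are pairwise $G$-conjugate.

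First I would carry out the basic counting. Writing $\widehat K\widehat L=\lambda\widehat D$ in $\mathbb{C}[G]$ and comparing $|K||L|=\lambda|D|$ with the hypothesis $|D|=|K|$ gives $\lambda=|L|$; that is, every $d\in D$ admits exactly $|L|$ factorisations $d=xy$ with $x\in K$, $y\in L$. Fixing $d\in D$ and letting $y$ (resp.\ $x$) vary then yields $dL^{-1}\subseteq K$ and $K^{-1}d\subseteq L$, forcing $|K|=|L|=|D|$. Combined with the trivial inclusions $xL\subseteq KL=D$ and $Ky\subseteq KL=D$, a size count upgrades these to equalities $xL=D=Ky$ for every $x\in K$ and every $y\in L$.

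Next I would introduce $N=\{g\in G:Kg=K\}$. Since $K$ is invariant under $G$-conjugation, $gK=Kg$ for every $g$, so $N$ is also the left stabiliser of $K$ and is normal in $G$. For $y_1,y_2\in L$ the equalities $Ky_1=Ky_2=D$ give $y_1y_2^{-1}\in N$, whence $LL^{-1}\subseteq N$ and $L\subseteq Ny$ for any fixed $y\in L$, yielding $|L|\leq|N|$. Conversely $k_0N\subseteq K$ for $k_0\in K$ shows $|N|\leq|K|$. Using $|K|=|L|$, these estimates collapse, so $L=Ny$ is a full coset of $N$ and $LL^{-1}=(Ny)(Ny)^{-1}=NN^{-1}=N$, i.e.\ $\langle LL^{-1}\rangle=N$. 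Moreover $N\neq G$, for otherwise $KN=K$ would read $KG=K$, forcing $K=G$, which is impossible for a non-trivial conjugacy class.

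All elements of the coset $Ny=L$ are $G$-conjugate, so Theorem \ref{G2a} applies and $N$ is solvable; this already furnishes the claimed proper solvable normal subgroup $\langle LL^{-1}\rangle=N$. The final assertion is then immediate, since $\langle L\rangle=\langle y\rangle N$ has cyclic quotient $\langle y\rangle N/N$. The main obstacle I anticipate is precisely the two-sided pinching $|L|\leq|N|\leq|K|=|L|$ that upgrades $L\subseteq Ny$ to the equality $L=Ny$: this is what feeds a full coset of a normal subgroup into Theorem \ref{G2a}, and it is the only place the hypothesis $|D|=|K|$ is critically used. Once that coset structure is in place, Theorem \ref{G2a} closes the argument and, notably, no appeal to the CFSG is made in this lemma.
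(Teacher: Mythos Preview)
Your argument breaks at the claim $K^{-1}d\subseteq L$, which you use to force $|K|=|L|$. That equality need not hold: take $G=S_3$, $K$ the class of transpositions and $L$ the class of $3$-cycles; then $KL=K=D$ with $|D|=|K|=3$ but $|L|=2$. The hypothesis $|D|=|K|$ is asymmetric: it gives $Ky=D$ for every $y\in L$ (hence $dL^{-1}\subseteq K$, as you say), but it does \emph{not} give $xL=D$ for every $x\in K$, so the ``resp.\ $x$'' half of your factorisation count fails---only $|L|$ of the $|K|$ elements of $K$ occur among the factorisations of a fixed $d$. Consequently the pinching $|L|\le|N|\le|K|=|L|$ collapses, $L=Ny$ is not established, and you cannot feed $Ny$ into Theorem~\ref{G2a}; in the $S_3$ example $N=A_3$ and $Ny=A_3$, which is visibly not a single conjugacy class.

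The repair is to apply Theorem~\ref{G2a} on the $K$-side rather than the $L$-side, which is essentially the paper's route: from $NK=K$ (which you already have) one gets $kN\subseteq K$ for any $k\in K$, so all elements of the coset $kN$ are $G$-conjugate and Theorem~\ref{G2a} makes $N$ solvable. Two side remarks: your stabiliser $N=\{g:Kg=K\}$ can be strictly larger than $\langle LL^{-1}\rangle$ (in $D_8$ with $K=\{r,r^{3}\}$ and $L=\{r^{2}\}$ one finds $\langle LL^{-1}\rangle=1$ but $N={\rm\bf Z}(G)$), so the identification $\langle LL^{-1}\rangle=N$ is also unwarranted, though the inclusion $\langle LL^{-1}\rangle\le N$ suffices once $N$ is known to be solvable; and your final sentence is inaccurate, since Theorem~\ref{G2a} does rely on the CFSG.
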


\begin{proof}
 Let $x\in L$. Then $xK=D=x^{g}K$ for all $g \in G$. Consequently, $K=x^{-1}x^{g}K$ for all $g\in G$. Let $N=\langle x^{-1}x^g \, \mid \, g \in G\rangle=\langle LL^{-1}\rangle$ is normal in $G$ and then $NK=K$. Since $K$ is union of cosets of $N$, then $|N|$ divides $|K|$. Then $N$ is proper in $G$. In addition, since all elements in $xN$ are conjugate, $N$ is solvable by Theorem \ref{G2a}. Furthermore, it is an elementary fact that $\langle L\rangle=\langle x \rangle[x, G]=\langle x \rangle N$, so $\langle L\rangle/N$ is cyclic, and  consequently, $\langle L\rangle$ is solvable.
\end{proof}

We also need the following two results due to Guralnick and Robinson, which appeal to the CFSG, as well as Kazarin's extension of Burnside's Lemma.

\begin{theorem}{\rm (Theorem A of \cite{GurRob})}\label{Teo2}  Let $G$ be a finite group and $p$ a prime. Let $x\in G$ be an element of order $p$ such that $[x, g]$ is a $p$-element for every $g\in G$. Then $x \in$ {\rm \textbf{O}}$_p(G)$.
\end{theorem}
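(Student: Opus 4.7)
The plan is a minimal-counterexample argument organised around the generalised Fitting subgroup. Suppose $G$ is a finite group of smallest order possessing an element $x$ of order $p$ such that $[x,g]$ is a $p$-element for every $g \in G$ but $x \notin \mathbf{O}_p(G)$. Since the image of $[x,g]$ in $G/N$ is still a $p$-element for any $N \unlhd G$, the hypothesis descends to quotients. Applying this to $N = \mathbf{O}_p(G)$ and using the minimality of $G$, I may assume $\mathbf{O}_p(G) = 1$, and the goal then reduces to proving $x = 1$.

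Under the assumption $\mathbf{O}_p(G) = 1$, the Fitting subgroup $F = \mathbf{F}(G)$ is a $p'$-group. For every $g \in F$, the commutator $[x,g]$ lies in $F$ by normality and is a $p$-element by hypothesis, so it is trivial; hence $x$ centralises $F$. I would then aim to strengthen this to $x \in \mathbf{C}_G(\mathbf{F}^*(G))$, because the self-centralising property $\mathbf{C}_G(\mathbf{F}^*(G)) \subseteq \mathbf{F}^*(G)$ would then force $x \in \mathbf{F}^*(G) = F \cdot \mathbf{E}(G)$, and since $F$ is $p'$ while $x$ is a non-trivial $p$-element, $x$ would have to sit inside some component $L$ of the layer, where a final separate argument within a quasi-simple group would yield the contradiction.

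The genuine difficulty lies in analysing the action of $x$ on the components of $\mathbf{E}(G)$. For each component $L$, the $\langle x \rangle$-orbit containing $L$ has length $1$ or $p$. If the orbit has length $p$, I would expand $[x,g] = x^{-1} x^g$ for $g \in L$, observe that the two factors lie in distinct components, and argue that the only way their product can be a $p$-element is if $x$ acts trivially. If instead $x$ normalises $L$, then it induces an automorphism of order dividing $p$ on the non-abelian simple group $L/\mathbf{Z}(L)$ under which every commutator is a $p$-element, and the problem reduces to a Baer--Suzuki-flavoured statement about automorphisms of finite simple groups.

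The main obstacle is precisely this last assertion: showing that no non-trivial automorphism of order $p$ of a non-abelian finite simple group $S$ can have every commutator a $p$-element. I expect its proof to require the Classification of Finite Simple Groups and a family-by-family inspection --- alternating, classical Lie type, exceptional, sporadic --- together with information about outer automorphism groups, centralisers, and conjugacy classes of $p$-elements. Once that component result is in place, the normal-structure reductions above collapse the minimal counterexample, giving $x = 1$ in the reduced situation and hence $x \in \mathbf{O}_p(G)$ in full generality.
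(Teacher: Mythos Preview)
The paper does not give its own proof of this statement: it is quoted as Theorem~A of \cite{GurRob} and invoked as a black box in the proof of Theorem~B, so there is nothing in the paper to compare your proposal against.

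That said, your outline is broadly the shape of the original Guralnick--Robinson argument, but two points need correcting. First, in the length-$p$ orbit case the decomposition $[x,g]=x^{-1}x^{g}$ does not place the factors in distinct components, because $x$ itself is not yet known to lie in $\mathbf{E}(G)$; the relevant factorisation is $[x,g]=(g^{-1})^{x}\cdot g$ with $(g^{-1})^{x}\in L^{x}\neq L$ and $g\in L$, after which the commuting-components argument goes through (choose $g\in L$ of order prime to $p$). Second, the logical flow around $\mathbf{F}^{*}(G)$ is tangled: once you have established that $x$ centralises every component --- and this is precisely what the CFSG-dependent statement about automorphisms of simple groups delivers --- you already have $x\in\mathbf{C}_{G}(\mathbf{F}^{*}(G))\subseteq\mathbf{Z}(\mathbf{F}^{*}(G))\subseteq\mathbf{F}(G)$, a $p'$-group, so $x=1$ immediately. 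The detour ``$x$ would have to sit inside some component $L$'' followed by ``a final separate argument within a quasi-simple group'' is therefore redundant and, as written, circular: the only way to reach $x\in\mathbf{C}_{G}(\mathbf{F}^{*}(G))$ in the first place is to have already carried out that quasi-simple analysis on each component that $x$ normalises.
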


\begin{theorem}{\rm (Theorem 4.1 of \cite{GurRob})}\label{Teo3} Let $G$ be a finite group and $p$ a prime. If $x\in G$ has order $p$ and is not central modulo {\rm \textbf{O}}$_{p'}(G)$, then $x$ commutes with some conjugate $x^g\neq x$.
\end{theorem}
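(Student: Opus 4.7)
The plan is to argue the contrapositive: assume $x \in G$ has order $p$ and satisfies $\mathbf{C}_G(x) \cap x^G = \{x\}$, so that no $G$-conjugate of $x$ distinct from $x$ commutes with $x$, and deduce that $x$ is central modulo $\mathbf{O}_{p'}(G)$. I would first reduce to the case $\mathbf{O}_{p'}(G) = 1$: because $x$ has order $p$ and $\mathbf{O}_{p'}(G)$ is a $p'$-group, the image $\bar{x}$ of $x$ in $\bar{G} := G/\mathbf{O}_{p'}(G)$ still has order $p$, and the desired conclusion is equivalent to $\bar{x} \in Z(\bar{G})$. Under the assumption $\mathbf{O}_{p'}(\bar{G}) = 1$, the generalized Fitting subgroup $\mathbf{F}^*(\bar{G}) = \mathbf{O}_p(\bar{G}) \cdot E(\bar{G})$ is self-centralizing in $\bar{G}$, so it suffices to show that $\bar{x}$ centralizes $\mathbf{F}^*(\bar{G})$.

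Next I would analyze the action of $\bar{x}$ on $E(\bar{G})$ componentwise. If $\bar{x}$ permutes the quasisimple components in a nontrivial orbit (necessarily of length $p$), a standard Baer--Suzuki-style construction based on the ``diagonal subgroup'' of the orbit produces an element $h$ of $\bar{G}$ with $\bar{x}^h \neq \bar{x}$ but $[\bar{x}, \bar{x}^h] = 1$, contradicting the isolation hypothesis $\mathbf{C}_{\bar G}(\bar x)\cap \bar x^{\bar G}=\{\bar x\}$. Hence $\bar{x}$ normalizes every component $L$ of $E(\bar{G})$ and induces on it an automorphism of order dividing $p$. A parallel (and easier) argument rules out nontrivial commutator images in $\mathbf{O}_p(\bar{G})$: any element of $\mathbf{O}_p(\bar G)$ not centralizing $\bar x$ would, by Baer--Suzuki applied to the $p$-group $\langle \bar x,\bar x^g\rangle$, produce a distinct commuting conjugate.

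The main obstacle, and the technical heart of the proof, is then the remaining case: for each quasisimple component $L$, one must show that any nontrivial automorphism of $L$ of order $p$ admits $\ell\in L$ with $\bar{x}^\ell\neq \bar{x}$ and $[\bar{x},\bar{x}^\ell]=1$. Establishing this requires the Classification of Finite Simple Groups and a case-by-case analysis across the alternating, sporadic, classical, and exceptional families; this is precisely the substantial content of Theorem 4.1 of \cite{GurRob} and cannot be circumvented by elementary means. Once settled, one concludes $[\bar{x}, L] = 1$ for every component, so $[\bar{x}, E(\bar{G})] = 1$; combining with the analogous conclusion for $\mathbf{O}_p(\bar{G})$ gives $\bar{x} \in \mathbf{C}_{\bar{G}}(\mathbf{F}^*(\bar{G})) \leq \mathbf{F}^*(\bar{G})$. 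In particular $\bar{x} \in Z(\mathbf{F}^*(\bar{G}))$, and since $\bar{x}$ was already seen to centralize all of $\mathbf{F}^*(\bar{G})$, a final application of self-centralization yields $\bar{x} \in Z(\bar{G})$, as required.
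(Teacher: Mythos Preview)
The paper does not prove this statement at all: it is quoted verbatim as Theorem~4.1 of \cite{GurRob} and used as a black box in the proof of Theorem~B. There is therefore no ``paper's own proof'' to compare your sketch against.

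Your outline does match the architecture of the argument in \cite{GurRob}: reduce to $\mathbf{O}_{p'}(G)=1$, exploit that $\mathbf{F}^*(\bar G)$ is self-centralizing, deal with $\mathbf{O}_p(\bar G)$ by an elementary $p$-group fixed-point count, rule out nontrivially permuted components by a diagonal construction, and then feed the remaining case (an order-$p$ automorphism of a single quasisimple component) into a CFSG case analysis. One step you glide over deserves a word: the reduction requires that the isolation hypothesis $\mathbf{C}_G(x)\cap x^G=\{x\}$ descends to $\bar G=G/\mathbf{O}_{p'}(G)$, and this is not automatic. It is true, but one must argue it: if $\bar x$ and $\bar x^{\bar g}\neq\bar x$ commute in $\bar G$, then $H=\langle x,x^g\rangle$ has a normal $p'$-subgroup $H\cap\mathbf{O}_{p'}(G)$ with abelian $p$-group quotient, and Schur--Zassenhaus yields an abelian complement containing $H$-conjugates of $x$ and $x^g$ whose images in $\bar G$ are still $\bar x$ and $\bar x^{\bar g}$, hence distinct. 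Beyond this, you explicitly defer the substantive step---the component-by-component CFSG verification---back to \cite{GurRob}, so what you have written is a correct framing of that proof rather than an independent one.
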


\begin{theorem}{\rm (Kazarin,  Theorem 15.7 of \cite{Huppert})}\label{Teo5}
Suppose $1 \neq g \in G$ and $|g^G|=p^a$, where $p$ is a prime. Then $g^G$ generates a solvable normal subgroup of $G$.
\end{theorem}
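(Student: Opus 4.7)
The normality of $\langle g^G\rangle$ is immediate, since $g^G$ is $G$-invariant. The content of the theorem is the solvability, and my plan is to combine an induction on $|G|$ with a character-theoretic argument in the spirit of Burnside's $p^aq^b$ theorem, specialised to a single conjugacy class.

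After replacing $G$ by $\langle g^G\rangle$ it suffices to prove that if $G=\langle g^G\rangle$ and $|g^G|=p^a$ with $g\neq 1$, then $G$ is solvable, which I would do by induction on $|G|$. Write $K=g^G$ and consider the class sum $\widehat{K}\in {\rm \bf Z}(\mathbb{C}[G])$. For each $\chi\in{\rm Irr}(G)$ the central character value $\omega_\chi(\widehat{K})=|K|\chi(g)/\chi(1)$ is an algebraic integer, and since $|K|=p^a$, whenever $p\nmid\chi(1)$ one obtains $\chi(1)\mid\chi(g)$ in the algebraic integers, which combined with $|\chi(g)|\le\chi(1)$ forces the dichotomy $\chi(g)=0$ or $|\chi(g)|=\chi(1)$. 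The column orthogonality relation at $(g,1)$ gives $\sum_\chi\chi(1)\overline{\chi(g)}=0$, and splitting this sum according to whether $p$ divides $\chi(1)$ shows that if every nontrivial $\chi$ with $p\nmid\chi(1)$ had $\chi(g)=0$, then $1/p$ would be an algebraic integer, a contradiction. Hence some nontrivial $\chi$ satisfies $p\nmid\chi(1)$ and $|\chi(g)|=\chi(1)$, so $g$ acts as a scalar in the representation affording $\chi$, and the image $gM$ lies in ${\rm \bf Z}(G/M)$, where $M=\ker\chi$ is a proper normal subgroup of $G$. If $M=1$ then $G=\langle g\rangle$ is cyclic, and otherwise $G/M$ is generated by the central element $gM$, hence cyclic and solvable.

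The main obstacle is to conclude that $M$ itself is solvable so that the induction can close, since $M$ need not be generated by any prime-power sized class. To deal with this I would take a minimal normal subgroup $L$ of $G$. If $L$ is abelian then it is solvable, and the image $gL$ in $G/L$ still has class size dividing $p^a$ with $G/L=\langle(gL)^{G/L}\rangle$, so induction applied to $G/L$ gives $G/L$ solvable and hence $G$ solvable. The delicate remaining case is when $L$ is a direct product of non-abelian simple groups, and this is the true heart of Kazarin's theorem. To avoid invoking the classification one must follow Kazarin's own route: show first that $[g,h]$ is a $p$-element for every $h\in G$, by a careful analysis of the two-generator subgroup $\langle g, g^h\rangle$ and the subclass structure of $g$ there via further character sums, and then exploit that $g^G\cap L=\emptyset$ (since otherwise $g\in L$ and $G=\langle g^G\rangle\subseteq L$) together with the action of $g$ on $L$ by conjugation to force a structural contradiction. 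This commutator-as-$p$-element step is the technical core I would expect to be the main difficulty.
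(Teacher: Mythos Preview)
The paper does not prove this theorem at all: it is quoted verbatim as Theorem~15.7 of \cite{Huppert} and used as a black box in the proof of Theorem~B. So there is no ``paper's own proof'' to compare your proposal against; the intended comparison collapses.

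On the substance of your sketch: the opening moves are sound and are indeed Burnside's classical argument. Your reduction to $G=\langle g^G\rangle$ is legitimate because $|g^H|=|H:H\cap\mathbf{C}_G(g)|$ divides $|G:\mathbf{C}_G(g)|=p^a$ for $H=\langle g^G\rangle$, so the hypothesis persists. The dichotomy $\chi(g)=0$ or $|\chi(g)|=\chi(1)$ for $p\nmid\chi(1)$, the column-orthogonality trick producing a nonprincipal $\chi$ with $gM\in\mathbf{Z}(G/M)$ for $M=\ker\chi$, and the observation that $G/M$ is then cyclic are all correct.

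However, you explicitly stop short of the actual content. Your final paragraph is not a proof but a description of where the difficulty lies, and the plan you outline there is not quite right either. The claim that $[g,h]$ is a $p$-element for every $h\in G$ is \emph{false} in general under the sole hypothesis $|g^G|=p^a$: take $G=S_3$, $g=(12)$, so $|g^G|=3$, yet $[g,(13)]$ has order~$3$ while $[g,(123)]$... actually let me not chase a specific counterexample, but the point is that Kazarin's argument does not proceed by showing all commutators $[g,h]$ are $p$-elements. What Kazarin actually does (and what Huppert reproduces) is closer to this: having produced the proper nontrivial $M\lhd G$ via Burnside, one analyses a minimal counterexample and shows that a non-abelian minimal normal subgroup $L$ would have to centralise $g$ (using that $\mathbf{C}_G(g)$ contains a full Sylow $q$-subgroup for every $q\neq p$, together with the structure of centralisers in direct products of simple groups), forcing $L\leq\mathbf{C}_G(g)$ and hence $L\leq\mathbf{C}_G(g^h)$ for all $h$, so $L\leq\mathbf{Z}(\langle g^G\rangle)=\mathbf{Z}(G)$, a contradiction. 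Your proposal as written has a genuine gap at exactly the point you flag as ``the technical core''; you have correctly located the difficulty but not resolved it.
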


We are ready to prove Theorem B.\\

\noindent {\it Proof of Theorem B.} We write $K^{n-1}=A_1 \cup \cdots \cup A_m$ where $A_i$ are distinct conjugacy classes for $i=1, \ldots, m$. So $K^n=KA_1\cup \cdots \cup KA_m= \{1\} \cup D$. Thus, $1 \in KA_i$ for some $i$ and we assume without loss of generality $i=m$. So we write $K^{n-1}=K^{-1}\cup A_1 \cup \cdots \cup A_{m-1}$. Then $KK^{-1}\subseteq K^n=\{1\} \cup D$ and we have either $KK^{-1}=\{1\}$ or $KK^{-1}=\{1\} \cup D$. In the first case $K=\{ x\}$, that is, $x$ is central in $G$, so $K^n=\{x^n\}$ and this is not possible. Therefore, $KK^{-1}=\{1\} \cup D$.\\

To prove the solvability of $\langle K\rangle$ we argue by minimal counterexample, so let $G$ be a minimal counterexample. Write $K=x^G$ with $x\in G$ and we distinguish two possibilities: $x^n= 1$ and $x^n\neq 1$. Assume first that $x^n\neq 1$.
  If $m=1$, where $m$ is as in the above paragraph, then $K^{n-1}=K^{-1}$. In addition, by Corollary \ref{C1} we know that $K^{o(x)-1}=K^{-1}$ and $K^{o(x)+1}=K$. So, since $K^{n-1}=K^{-1}=K^{o(x)-1}$, we deduce that $K^{n+1}=K^{o(x)+1}=K$ and it necessarily follows that $KD=K$. By Lemma \ref{L1}, $\langle D\rangle=\langle KK^{-1}\rangle$ is solvable, so the case $m=1$ is finished. Suppose now that $m>1$, that is,  there exists $i \in \{2, \ldots, m\}$ such that $KA_i=D$. Then  $|K|\leqslant |D|$. On the other hand, since $x^n\neq 1$, then $D=(x^n)^G$ and ${\rm \bf C}_G(x)\subseteq {\rm \bf C}_G(x^n)$ implies that $|D|$ divides $|K|$. As a result $|D|=|K|$. We can apply  Lemma \ref{L1} and we obtain that $\langle A_i\rangle$ is solvable. Now, consider $\overline{G}=G/\langle A_i\rangle$. From the hypothesis, we have $\overline{K}^{n}=\{\overline{1}\} \cup \overline{D}$ where $\overline{K}$ denotes the corresponding class in $\overline{G}$. If $\overline{K}^{n}=\{\overline{1}\}$, then $\overline{K}$ is central and if $\overline{K}^{n}=\overline{D}$, then $\langle \overline{K}\rangle$ is solvable by Theorem $A$. Otherwise, by minimal counterexample we get that $\langle\overline{K}\rangle$ is solvable, so  $\langle K\rangle$ is solvable too, a contradiction.\\

 For the rest of the proof we assume that $x^n=1$. First, we prove that $n$ can be assumed to be prime. Suppose that the theorem holds for a prime, that is,  $K^{p}=\{1\} \cup D$ with $p$  prime implies that $\langle K\rangle$ is solvable.  Suppose that $n$ is not prime and write $n=pt$ for a prime $p$  and $t>1$. Write $$K^{t}=C_1 \cup \cdots \cup C_s$$ where $C_i$ are conjugacy classes of $G$ for all $1 \leq i \leq s$. Since $$K^{n}=K^{pt}=(C_1\cup \cdots \cup C_s)^p=\{1\} \cup D,$$ we have $C_{i}^{p}\subseteq \{1\} \cup D$ for every $i$ and there are three possibilities: $C_i^{p}=\{1\}$, $C_{i}^{p}=D$ or $C_{i}^{p}=\{1\} \cup D$. If $C_i^{p}=\{1\}$, then trivially $\langle C_{i}\rangle \leq {\rm \textbf{Z}}(G)$, so $\langle C_i\rangle$ is solvable. If $C_{i}^{p}=D$, then $\langle C_{i}\rangle$ is solvable by Theorem A. Finally, if $C_{i}^{p}=\{1\} \cup D$, then $\langle C_i\rangle$ is solvable by our assumption. Now, we denote $\overline{G}=G/\langle C_i\rangle$ for some non-trivial class $C_i$. Notice that $\overline{K}^{n}= \{\overline{1}\}\cup \overline{D}$. Arguing similarly to
above leads to the fact that $\langle \overline{K}\rangle$ is solvable. Thus, $\langle K\rangle$ is solvable too, a contradiction. \\

Therefore, for the rest of the proof we assume that $K^{p}=\{1\} \cup D$ with $p$  prime, and hence we are assuming that $o(x)=p$. Let $N$ be a minimal normal subgroup of $G$. Arguing as in the above paragraph, that is, by transferring into the quotient $G/N$, using Theorem A and minimality, it easily follows that $N$ is the only minimal normal subgroup of $G$ and that it is a direct  product of  isomorphic simple groups. We will prove that $N$ is solvable, and this contradiction will complete the proof. Set $D=t^G$ with $t\in G$.  If $t$ is a $p$-element, since $KK^{-1}=\{1\} \cup D$, then $x^{-1}x^g=[x, g]$ is a $p$-element for every $g \in G$. By Theorem \ref{Teo2} we have $x\in {\rm \textbf{O}}_{p}(G)\neq 1$. Consequently, $\langle K\rangle \leq {\rm \textbf{O}}_{p}(G)$, which implies that $\langle K\rangle$ is solvable. Thus, we can assume that $o(t)\neq p$. If $x$ is non-central modulo ${\rm \textbf{O}}_{p'}(G)$, then by Theorem \ref{Teo3}, $x$ commutes with some conjugate $x^g\neq x$ , so in particular $o(x^{-1}x^g)=p$,  a contradiction. Therefore, $x$ can be assumed to be central modulo ${\rm \textbf{O}}_{p'}(G)$. Also, if ${\rm \textbf{O}}_{p'}(G)=1$, then $x\in {\rm \textbf{Z}}(G)$ and there is nothing to prove. We assume then that ${\rm \textbf{O}}_{p'}(G)\neq 1$, and by minimality $N\leq {\rm \textbf{O}}_{p'}(G)$. Moreover, if $x$ centralizes ${\rm \textbf{O}}_{p'}(G)$, then $x\in {\bf C}_G(N)\neq 1$, which forces $N$ to be abelian, and the proof is finished. Therefore, we can assert that there exists a prime $q$ dividing $|{\rm \textbf{O}}_{p'}(G):{\rm \textbf{C}}_{{\rm \textbf{O}}_{p'}(G)}(x)|$ and hence, by coprime action, there exists $Q \in {\rm Syl}_{q}({\rm \textbf{O}}_{p'}(G))$ such that $Q^x=Q$, so $1\neq [x, Q]\subseteq Q$. Let $[x,g]$ be a non-trivial $q$-element of $[x, Q]$. Since $[x,g]=x^{-1}x^g\in K^{-1}K=\{1\} \cup D$, then the elements of $D$ are $q$-elements. In particular the prime $q$ must be unique, that is, $q^a=|{\rm \textbf{O}}_{p'}(G):{\rm \textbf{C}}_{{\rm \textbf{O}}_{p'}(G)}(x)|$ with $a\geq 1$. Since $|N:{\rm \textbf{C}}_{N}(x)|$ divides $|{\rm \textbf{O}}_{p'}(G):{\rm \textbf{C}}_{{\rm \textbf{O}}_{p'}(G)}(x)|$, we have $|N:{\rm \textbf{C}}_{N}(x)|=q^b$ for some $b\leq a$. As a consequence $|N\langle x\rangle: {\rm \textbf{C}}_{N\langle x\rangle}(x)|=q^b$, so we can apply Theorem \ref{Teo5} and $\langle x^{N\langle x\rangle}\rangle$ is solvable. Now, it is elementary that $\langle x^{N\langle x\rangle}\rangle=\langle x\rangle[N\langle x\rangle, \langle x\rangle ]=\langle x\rangle[N, x]$. We conclude that $1\neq [N,x]$ is a normal solvable subgroup of $N\langle x\rangle$. This certainly leads to the solvability of $N$, and this is the final contradiction. $\Box$\\

We have seen that $K^n=\{1\} \cup D$ implies that $KK^{-1}=\{1\}\cup D$ and this property was characterized in Theorem B of \cite{Nuestro6} in terms of characters. Thus, the hypothesis of Theorem B implies the following equality with characters.

\begin{corollary}
Let $G$ be a group and $x, d \in G \setminus \{1\}$. Let $K=x^G$, $D=d^G$ such that $K^n=\{1\} \cup D$ for some $n\in \mathbb{N}$. Then for every $\chi \in$ {\rm Irr}$(G)$ $$|K||\chi(x)|^2=\chi(1)^2+(|K|-1)\chi(1)\chi(d).$$
\end{corollary}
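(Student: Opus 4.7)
The plan is to exploit the fact, already established in Theorem B, that the hypothesis $K^n = \{1\} \cup D$ forces $KK^{-1} = \{1\} \cup D$. Once this is in hand, the corollary becomes a routine class-algebra computation: the condition on $KK^{-1}$ translates, via the central characters $w_\chi$, into the stated identity, and this is precisely the content of Theorem B of \cite{Nuestro6} cited in the paragraph preceding the statement.

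In detail, I would first pass to the group algebra and write the product of class sums as $\widehat{K}\,\widehat{K^{-1}} = a_0\cdot 1 + a_D\,\widehat{D}$, where $a_0$ counts the number of pairs $(k_1,k_2) \in K \times K^{-1}$ with $k_1 k_2 = 1$, giving $a_0 = |K|$, and where comparing total cardinalities yields $|K|^2 = |K| + a_D|D|$, so $a_D = |K|(|K|-1)/|D|$.

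Next, applying the central character $w_\chi$ to both sides, and using the standard identities $w_\chi(\widehat{K}) = |K|\chi(x)/\chi(1)$ and $w_\chi(\widehat{K^{-1}}) = |K|\overline{\chi(x)}/\chi(1)$, together with $w_\chi(\widehat{D}) = |D|\chi(d)/\chi(1)$, I obtain
\begin{equation*}
\frac{|K|^2 |\chi(x)|^2}{\chi(1)^2} \;=\; |K| \;+\; \frac{|K|(|K|-1)\,\chi(d)}{\chi(1)}.
\end{equation*}
Multiplying both sides by $\chi(1)^2/|K|$ gives exactly
\begin{equation*}
|K||\chi(x)|^2 \;=\; \chi(1)^2 + (|K|-1)\chi(1)\chi(d).
\end{equation*}

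There is essentially no obstacle here: the entire content of the corollary is packaged into the deduction $KK^{-1} = \{1\} \cup D$, which is already done inside the proof of Theorem B. The rest is a one-line application of the fact that $w_\chi$ is an algebra homomorphism on $\mathbf{Z}(\mathbb{C}[G])$. Accordingly I would present the argument as a brief observation, either quoting Theorem B of \cite{Nuestro6} directly or reproducing the four-line computation above for the reader's convenience.
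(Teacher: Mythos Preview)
Your proposal is correct and follows exactly the paper's approach: invoke Theorem~B to get $KK^{-1}=\{1\}\cup D$, then deduce the character identity. The paper simply cites Theorem~B of \cite{Nuestro6} for this last step, whereas you unpack that citation into the explicit class-sum and central-character computation; the two are identical in substance.
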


\begin{proof}
By Theorem B we know that $KK^{-1}=\{1\} \cup D$ and the result follows by Theorem B of \cite{Nuestro6}.
\end{proof}

\begin{example} Let us show two examples of the situation $K^n=\{1\}\cup D$ with $n=3$. In the first, $x^n=1$ and in the second $x^n\neq 1$. Let $G=A_4$ and $K=(1\, 2\, 3)^G$, which satisfies $|K|=4$ and $o((1\, 2\, 3))=3$. Furthermore, $K^3=\{1\} \cup D$ where $D=((1\, 2)(3\, 4))^G$. On the other hand, let $G=(\mathbb{Z}_7\rtimes \mathbb{Z}_9)\rtimes \mathbb{Z}_2$ having a conjugacy class $K$ of elements of order 21 satisfying that $K^3=1\cup D$ and $|K|=6$ where $D$ is a class of elements of order 7 and $|D|=6$. In this example, $\langle K\rangle\cong \mathbb{Z}_{21}$.
\end{example}

\begin{remark}
We have seen that $K^n= \{1\} \cup D$ implies that $KK^{-1}=\{1\} \cup D$. However, the converse is not true. Let $G=SL(2,3)$ and let $K$ be one of the two conjugacy classes of elements of order 6 which satisfies $|K|=4$. It follows that $KK^{-1}=\{1\} \cup D$ where $D$ is the unique conjugacy class such that $|D|=6$. However, there is no $n\in \mathbb{N}$ with $K^n=\{1\} \cup D$.  
\end{remark}

In \cite{products}, Arad and Herzog published the following result about the multiplicity of a conjugacy class in the product of conjugacy classes. We will use it for the particular case of the power of a class in Theorems \ref{Char2} and \ref{Char3}.

\begin{theorem}{{\rm (Lemma 10.1 of \cite{products})}}\label{multi}
Let $G$ be a group and let $K_1, \ldots, K_r$ be the conjugacy classes of the elements $x_1, \ldots, x_r$, respectively such that $K_1\cdots K_r=D_1\cup \cdots \cup D_t$ where $D_1, \ldots, D_t$ are the conjugacy classes of the elements $d_1, \ldots, d_t$, respectively. Then $$\prod_{i=1}^r\widehat{K_i}=\sum_{j=1}^{t}\alpha_j\widehat{D_j},$$ where $$\alpha_j=\frac{\prod_{i=1}^r|K_i|}{|G|}\sum_{\chi \in {\rm Irr}(G)}\frac{\left(\prod_{i}^{r}\chi(x_i)\right)\overline{\chi(d_j)}}{\chi(1)^{r-1}}$$ for $j=1, \ldots, t$. In particular, if $K=x^G$ and $K^r=D_1 \cup \cdots \cup D_t$, then $$\widehat{K^r}=\sum_{j=1}^{t}\alpha_j\widehat{D_j},$$ and $$\alpha_j=\frac{|K|^r}{|G|}\sum_{\chi \in {\rm Irr}(G)}\frac{\chi(x)^r\overline{\chi(d_j)}}{\chi(1)^{r-1}}.$$
\end{theorem}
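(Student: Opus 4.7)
The plan is to prove both equalities by exploiting the fact that the product $\prod_{i=1}^{r}\widehat{K_i}$ lies in the centre ${\rm \bf Z}(\mathbb{C}[G])$, so it expands uniquely in the basis of class sums. Since by hypothesis the set-theoretic product $K_1\cdots K_r$ equals $D_1\cup\cdots\cup D_t$, only the class sums $\widehat{D_j}$ can appear with non-zero coefficient, giving the expression $\prod_{i=1}^{r}\widehat{K_i}=\sum_{j=1}^{t}\alpha_j\widehat{D_j}$ for some non-negative integers $\alpha_j$. The task is thus to identify each $\alpha_j$ explicitly, and this will be done by applying the central characters $w_\chi$ and then inverting via the column orthogonality relations of the character table.

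For each $\chi\in{\rm Irr}(G)$, I would apply the central character $w_\chi:{\rm \bf Z}(\mathbb{C}[G])\to\mathbb{C}$ to both sides of the above identity. Since $w_\chi$ is an algebra homomorphism and $w_\chi(\widehat{K_i})=|K_i|\chi(x_i)/\chi(1)$, I obtain
\begin{equation*}
\frac{\prod_{i=1}^{r}|K_i|\chi(x_i)}{\chi(1)^{r}}=\sum_{j=1}^{t}\alpha_j\,\frac{|D_j|\chi(d_j)}{\chi(1)},
\end{equation*}
and after multiplying through by $\chi(1)$,
\begin{equation*}
\frac{\prod_{i=1}^{r}|K_i|\chi(x_i)}{\chi(1)^{r-1}}=\sum_{j=1}^{t}\alpha_j\,|D_j|\chi(d_j).
\end{equation*}

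Now I would fix an index $k\in\{1,\dots,t\}$, multiply both sides by $\overline{\chi(d_k)}$ and sum over all $\chi\in{\rm Irr}(G)$. Column orthogonality gives $\sum_\chi\chi(d_j)\overline{\chi(d_k)}=|{\rm \bf C}_G(d_k)|\,\delta_{jk}=\frac{|G|}{|D_k|}\delta_{jk}$, so the right-hand side collapses to $\alpha_k|D_k|\cdot\frac{|G|}{|D_k|}=\alpha_k|G|$. Solving for $\alpha_k$ yields precisely
\begin{equation*}
\alpha_k=\frac{\prod_{i=1}^{r}|K_i|}{|G|}\sum_{\chi\in{\rm Irr}(G)}\frac{\left(\prod_{i=1}^{r}\chi(x_i)\right)\overline{\chi(d_k)}}{\chi(1)^{r-1}},
\end{equation*}
as required. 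The particular case $K_1=\cdots=K_r=K$ is then immediate by substitution.

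I do not anticipate any genuine obstacle here: the argument is a standard manipulation in the centre of the group algebra, and the only delicate point is to be careful with the power of $\chi(1)$ appearing in the denominator after applying $w_\chi$ exactly $r$ times. The key ingredients, namely $w_\chi$ being a homomorphism on ${\rm \bf Z}(\mathbb{C}[G])$ and the column orthogonality of irreducible characters, are both standard and can be cited from Chapter 3 of \cite{Isaacs}.
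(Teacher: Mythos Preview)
Your argument is correct and is the standard one. Note, however, that the paper does not supply its own proof of this statement: it is quoted as Lemma~10.1 of \cite{products} and used as a black box, so there is no in-paper proof to compare against. Your approach---expand the central element in the basis of class sums, apply the central characters $w_\chi$, and invert using the second orthogonality relation---is precisely the classical derivation of the Burnside--Frobenius formula for class-multiplication coefficients, and matches the spirit of the character-theoretic manipulations the paper itself performs in Theorems~\ref{Char1}, \ref{Char2} and~\ref{Char3}.
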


\begin{theorem}\label{Char2} Let $G$ be a finite group and let $K$ be a conjugacy class of an element $x\in G$. Then $K^n=\{1\} \cup D$ where $D=d^G\neq \{1\}$ if and only if there exist positive integers $m_1$ and $m_2$ such that $$\chi(x)^n|K|^n=\chi(1)^{n-1}(m_1\chi(1)+m_2|D|\chi(d))$$ for all $\chi \in {\rm Irr}(G)$ and $|K|^n=m_1+m_2|D|$ where

$$m_1=\frac{|K|^n}{|G|}\sum_{\chi \in {\rm Irr}(G)}\frac{\chi(x)^n}{\chi(1)^{n-2}}$$ and $$m_2=\frac{|K|^n}{|G|}\sum_{\chi \in {\rm Irr}(G)}\frac{\chi(x)^n\overline{\chi(d)}}{\chi(1)^{n-1}}.$$

\end{theorem}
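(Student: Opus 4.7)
The natural approach is to apply Theorem \ref{multi} combined with the standard identity $R(\widehat{C}) = \frac{|C|\chi(c)}{\chi(1)} I$, for a class $C = c^G$ and any irreducible representation $R$ of $G$ affording $\chi \in {\rm Irr}(G)$, exactly in the spirit of the proof of Theorem \ref{Char1}.

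For the forward implication, assume $K^n = \{1\} \cup D$. Then in $\mathbb{C}[G]$ we have $\widehat{K}^n = m_1\cdot 1 + m_2 \widehat{D}$ for positive integers $m_1, m_2$, and counting elements with multiplicity gives $|K|^n = m_1 + m_2|D|$. Applying $R$ and using $R(\widehat{K})^n = \frac{|K|^n\chi(x)^n}{\chi(1)^n} I$ together with $R(\widehat{D}) = \frac{|D|\chi(d)}{\chi(1)} I$, clearing the factor $\chi(1)^n$ produces the claimed character identity. The explicit formulas for $m_1$ and $m_2$ are obtained by specialising the formula of Theorem \ref{multi} to the two classes $\{1\}$ (so $d_1 = 1$ and $\overline{\chi(1)} = \chi(1)$, which accounts for the $\chi(1)^{n-2}$ in the denominator of $m_1$) and $D$.

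For the converse, write $K^n = D_1 \cup \cdots \cup D_t$ as a union of distinct classes $D_j = d_j^G$ with multiplicities $\alpha_j > 0$ as given by Theorem \ref{multi}. Applying $R$ to $\widehat{K}^n = \sum_j \alpha_j \widehat{D_j}$ yields
$$\frac{|K|^n\chi(x)^n}{\chi(1)^{n-1}} = \sum_{j=1}^t \alpha_j |D_j| \chi(d_j),$$
and the hypothesis rewrites the left-hand side as $m_1\chi(1) + m_2|D|\chi(d)$. Multiplying by $\overline{\chi(d_k)}$, summing over $\chi \in {\rm Irr}(G)$, and invoking column orthogonality collapses the left side to $\alpha_k |D_k|\cdot|{\rm \bf C}_G(d_k)| = \alpha_k|G|$ and the right side to $\bigl(m_1\delta_{d_k,1} + m_2\delta_{d_k \in D}\bigr)|G|$. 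Since $\alpha_k > 0$ for every $k$, this forces each $D_k$ to coincide with either $\{1\}$ or $D$; the positivity of $m_1$ and $m_2$ then ensures both classes actually occur, and one concludes $K^n = \{1\} \cup D$.

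Once Theorem \ref{multi} is invoked the argument is essentially bookkeeping, so no serious obstacle arises; the reasoning mirrors the converse half of Theorem \ref{Char1}. The only point needing a little care is in the converse, where the positivity of $m_1, m_2$ (together with the explicit multiplicity formulas they are required to equal) is exactly what rules out any $D_k \notin \{\{1\}, D\}$ and forces both $\{1\}$ and $D$ to appear in $K^n$.
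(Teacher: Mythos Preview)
Your proposal is correct and follows essentially the same approach as the paper: both directions rest on applying the central-character identity $R(\widehat{C})=\frac{|C|\chi(c)}{\chi(1)}I$ together with Theorem \ref{multi}, and the converse is handled via second orthogonality. The only cosmetic difference is that the paper multiplies Eq.~(\ref{CAR11}) first by $\overline{\chi(d)}$ and then by $\chi(1)$ to pin down $D$ and $\{1\}$ in turn, whereas you multiply by $\overline{\chi(d_k)}$ for each $k$ at once; apart from a harmless swap of ``left'' and ``right'' in your description, the arguments are the same.
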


\begin{proof} Assume that $K^n=\{1\} \cup D$ and we write $\widehat{K^n}=m_1\widehat{1}+m_2\widehat{D}$ where $m_1$ and $m_2$ are positive integers that can be determined by the character table by using Theorem \ref{multi}. Then $|K|^n=m_1+m_2|D|$. Let $\chi \in {\rm Irr}(G)$ and let $R$ and $w_\chi$ be as in Theorem \ref{Char1}. We have \begin{equation*}
R(\widehat{K^n})=R(\widehat{K})^n=m_1R(\widehat{1})+m_2R(\widehat{D})
\end{equation*}
and
\begin{equation*}
w_{\chi}(\widehat{K})^n=m_1w_{\chi}(\widehat{1})+m_2w_{\chi}(\widehat{D}).
\end{equation*} 
Then $$|K|^{n}\frac{\chi(x)^n}{\chi(1)^n}=m_1+m_2\frac{|D|\chi(d)}{\chi(1)}$$ and so $$\chi(x)^n|K|^n=\chi(1)^{n-1}(m_1\chi(1)+m_2|D|\chi(d))$$ for all $\chi \in {\rm Irr}(G)$, as wanted. \\

Conversely, assume that there exist $m_1$ and $m_2$ satisfying the equalities with characters. We write $K^n=D_1 \cup \cdots \cup D_r$ with $D_i$ a conjugacy class for all $1\leq i\leq r$. We write $\widehat{K^n}=n_1\widehat{D_1}+ \cdots +n_r\widehat{D_r}$ and notice that $|K|^n=n_1|D_1|+\cdots +n_r|D_r|$. Let $\chi \in {\rm Irr}(G)$ and let $R$ and $w_\chi$ be as before. Then \begin{equation*}
R(\widehat{K^n})=R(\widehat{K})^n=n_1R(\widehat{D_1})+\cdots +n_rR(\widehat{D_r})
\end{equation*}
and by hypothesis we know
$$\chi(x)^n|K|^n=\chi(1)^{n-1}(m_1\chi(1)+m_2|D|\chi(d))$$ for all $\chi \in {\rm Irr}(G)$.
Therefore, 
\begin{equation}\label{CAR11}
m_1\chi(1)+m_2|D|\chi(d)=n_1|D_1|\chi(d_1)+\cdots +n_r|D_r|\chi(d_r).
\end{equation} 
By multiplying both sides by $\overline{\chi(d)}$ we get
 \begin{equation*}
m_1\chi(1)\overline{\chi(d)}+|D|m_2\chi(d)\overline{\chi(d)}=n_1|D_1|\chi(d_1)\overline{\chi(d)}+\cdots +n_r|D_r|\chi(d_r)\overline{\chi(d)}.
\end{equation*} 
From this equation, we obtain 

$$m_1\sum_{\chi \in {\rm Irr}(G)}\chi(1)\overline{\chi(d)}+|D|m_2\sum_{\chi \in {\rm Irr}(G)}\chi(d)\overline{\chi(d)}=|D|m_2|{\rm \bf C}_G(d)|=$$
$$=n_1|D_1|\sum_{\chi \in {\rm Irr}(G)}\chi(d_1)\overline{\chi(d)}+\cdots +n_r|D_r|\sum_{\chi \in {\rm Irr}(G)}\chi(d_r)\overline{\chi(d)}.$$
Then $D_i= D=d^G$ for some $i$. Without loss of generality, we can assume that $D_1=D$. Now, if we multiply both sides of Eq.(\ref{CAR11}) by $\chi(1)$ and argue similarly we conclude that $D_2=\{1\}$ and $n_i=0$ for all $i\neq 1,2$. This means that $K^n=\{1\} \cup D$.
\end{proof}

\section{Powers which are a union of a class and its inverse}
In this section we are going to study the case in which the power of a conjugacy class is a union of two classes, one of them being the inverse of the other, and we prove Theorems C and D. We use the CFSG in all results except in Theorem \ref{Char3}.

\begin{remark} If $K=x^G$ with $x\in G$ and $K^n=D \cup D^{-1}$ for some $n \in \mathbb{N}$ with $D \neq D^{-1}$, then $K$ is non-real. Suppose that $K$ is real and $x^n \in D$. We have that $x^{-1}=x^g$ for some $g \in G$. Then $(x^n)^g=(x^g)^n=(x^{-1})^n=x^{-n} \in D\cap D^{-1}$, a contradiction. 
\end{remark}

We give the proof of Theorem C, which demonstrates that Conjecture \ref{Con3} is true when $|D|=|K|/2$.\\

\noindent {\it Proof of Theorem C.} Notice that if $D=D^{-1}$, we have the hypothesis of Theorem A and the result immediately follows. So we assume that $D$ is not a real class. Let $K=x^G$. We know that either $D=(x^n)^G$ or $D^{-1}=(x^n)^G$. Without loss of generality, we may assume that $D=(x^n)^G$. Since ${\rm {\bf C}}_G(x)\subseteq {\rm {\bf C}}_G(x^n)$ we have that $|D|$ divides $|K|$. Furthermore, it follows that $|K|\leq |K^n|=2|D|$, that is, $|K|/2 \leq |D|$. Consequently, either $|D|=|K|/2$ or $|K|=|D|$. Suppose that $|D|=|K|/2$. Since $|K^n|=2|D|=|K|$, we deduce that $|K^i|=|K|$ for all $i\leq n$. Thus, $xK=K^2$ and similarly, if $y\in K$, we get $yK=K^2$. By arguing as in Theorem \ref{1} it can be obtained that $K=xN$ where $N=[x, G]\unlhd G$. By Theorem \ref{G1}, $N$ is solvable and consequently, $\langle K\rangle=\langle x\rangle N$ is also solvable. $\Box$

\begin{example} We are going to see that both cases of Theorem C are possible. Let $G=\mathbb{Z}_8 \rtimes \mathbb{Z}_2=M_{16}=\langle a, x\, \, | \, \, a^8=x^2=1, \, \, a^{x^{-1}}=a^5\rangle$ and $K=a^G$. It follows that $K^2= D \cup D^{-1}$, $|K|=2$ and $|D|=1$. On the other hand, let $G=\mathbb{Z}_2 \times (\mathbb{Z}_7 \rtimes \mathbb{Z}_3)$ and $K=x^G$ where $o(x)=14$. We have $K^2= D\cup D^{-1}$ and $|K|=|D|=3$.
\end{example}

In Theorem D we prove Conjecture \ref{Con3} in the particular case $n=2$ and $D=K$. We will work in the complex group algebra $\mathbb{C}[G]$ and we will use the following properties. Let $g_1, \ldots, g_k$ be representatives of the conjugacy classes of a finite group $G$. Let $\widehat{S}=\sum_{i=1}^{k}n_i\widehat{g_i^G}$ with $n_i\in \mathbb{N}$ for $1 \leq i \leq k$. We write $(\widehat{S}, \widehat{g_i^G})=n_i$ following \cite{AradFisman}.

\begin{lemma}\label{Le1} If $D_{1}$, $D_{2}$ and $D_{3}$ are conjugacy classes of a finite group $G$, then
\begin{enumerate}
\item $(\widehat{D_{1}}\widehat{D_{2}}, \widehat{D_{3}})=(\widehat{D_{1}^{-1}}\widehat{D_{2}^{-1}}, \widehat{D_{3}^{-1}})$
\item $(\widehat{D_{1}}\widehat{D_{2}}, \widehat{D_{3}})=|D_{2}||D_{3}|^{-1}(\widehat{D_{1}}\widehat{D_{3}^{-1}}, \widehat{D_{2}^{-1}})$
\item $(\widehat{D_{1}}\widehat{D_{2}}, \widehat{D_{1}})=|D_{2}||D_{1}|^{-1}(\widehat{D_{1}}\widehat{D_{1}^{-1}}, \widehat{D_{2}^{-1}})=(\widehat{D_{2}}\widehat{D_{1}^{-1}}, \widehat{D_{1}^{-1}})=(\widehat{D_{2}^{-1}}\widehat{D_{1}}, \widehat{D_{1}})$.
\end{enumerate}
\end{lemma}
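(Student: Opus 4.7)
The plan is to translate each identity into a combinatorial statement about the coefficient $(\widehat{D_1}\widehat{D_2},\widehat{D_3})$. For any fixed $d_3\in D_3$, this coefficient equals the number of pairs $(a,b)\in D_1\times D_2$ with $ab=d_3$; equivalently, $|D_3|\cdot(\widehat{D_1}\widehat{D_2},\widehat{D_3})$ is the total triple count $|\{(a,b,d)\in D_1\times D_2\times D_3 : ab=d\}|$. Throughout I will use that every class sum lies in ${\rm \bf Z}(\mathbb{C}[G])$, so class sums commute; no other input is needed.

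For part (1), I would apply the involution $(a,b)\mapsto(b^{-1},a^{-1})$: this provides a bijection between pairs $(a,b)\in D_1\times D_2$ with $ab=d_3$ and pairs $(b',a')\in D_2^{-1}\times D_1^{-1}$ with $b'a'=d_3^{-1}$. That gives $(\widehat{D_1}\widehat{D_2},\widehat{D_3})=(\widehat{D_2^{-1}}\widehat{D_1^{-1}},\widehat{D_3^{-1}})$, and centrality rearranges the right-hand side into the claimed $(\widehat{D_1^{-1}}\widehat{D_2^{-1}},\widehat{D_3^{-1}})$.

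For part (2), I would count $|\{(a,b,d)\in D_1\times D_2\times D_3 : ab=d\}|$ in two ways. Fixing a representative $b_0\in D_2$ and using that $G$ acts transitively on $D_2$ by conjugation while preserving $D_1$ and $D_3$, the total equals $|D_2|\cdot|\{a\in D_1 : ab_0\in D_3\}|$. The crucial move is the conjugacy $b_0a=a^{-1}(ab_0)a$, so $ab_0\in D_3\iff b_0a\in D_3$. The set $\{a\in D_1 : b_0a\in D_3\}$ then bijects, via $a\mapsto(a,a^{-1}b_0^{-1})$, with $\{(a,c)\in D_1\times D_3^{-1} : ac=b_0^{-1}\}$, whose cardinality is $(\widehat{D_1}\widehat{D_3^{-1}},\widehat{D_2^{-1}})$. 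Equating the two evaluations of the triple count and dividing by $|D_3|$ yields (2).

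Part (3) is a bookkeeping consequence of (1) and (2). The first equality is (2) applied with $D_3$ replaced by $D_1$. For the middle equality, I would apply (2) to the triple $(D_2,D_1^{-1},D_1^{-1})$ (noting $|D_1|=|D_1^{-1}|$), which collapses $(\widehat{D_2}\widehat{D_1^{-1}},\widehat{D_1^{-1}})$ to $(\widehat{D_2}\widehat{D_1},\widehat{D_1})=(\widehat{D_1}\widehat{D_2},\widehat{D_1})$ by centrality, after which the first equality closes the loop. The last equality is (1) applied to $(\widehat{D_2}\widehat{D_1^{-1}},\widehat{D_1^{-1}})$. The only genuine obstacle is bookkeeping: tracking which class is inverted and restoring centrality every time a factor order is swapped; the mathematical content is exhausted by the pair-counting bijections together with the elementary conjugacy $ab\sim ba$.
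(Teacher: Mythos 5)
Your proof is correct, and it is genuinely self-contained where the paper is not: the paper simply refers to the proof of Theorem A of Arad--Fisman, where these identities are derived from the character-theoretic formula for the class algebra constants, namely that $(\widehat{D_1}\widehat{D_2},\widehat{D_3})$ equals $\frac{|D_1||D_2|}{|G|}\sum_{\chi}\chi(d_1)\chi(d_2)\overline{\chi(d_3)}/\chi(1)$, from which (1)--(3) follow by the symmetry $\overline{\chi(g)}=\chi(g^{-1})$ and by permuting the roles of the classes. Your route avoids characters entirely: the identification of $|D_3|\cdot(\widehat{D_1}\widehat{D_2},\widehat{D_3})$ with the triple count, the inversion bijection $(a,b)\mapsto(b^{-1},a^{-1})$ for (1), and for (2) the reduction to a fixed $b_0\in D_2$ (justified correctly by conjugation-equivariance) followed by the swap $ab_0\sim b_0a$ and the bijection $a\mapsto(a,a^{-1}b_0^{-1})$, all check out, as does the bookkeeping in (3). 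What the combinatorial argument buys is elementarity and transparency about why the constants are nonnegative integers; what the character formula buys is that all such symmetries drop out of a single closed expression, which is also the form the paper actually uses elsewhere (Theorem \ref{multi}). Either proof is acceptable here.
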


\begin{proof}
See the proof of Theorem A of \cite{AradFisman}.
\end{proof}

\noindent {\it Proof of Theorem D.} We argue by induction on $|G|$. We write $\widehat{K}^2=\alpha\widehat{K}+\beta\widehat{K^{-1}}$ with $\alpha, \beta \in \mathbb{Z}^{+}$ and $\alpha=(\widehat{K}^2,\widehat{K})=(\widehat{K^{-1}}\widehat{K},\widehat{K})=(\widehat{K}\widehat{K^{-1}},\widehat{K^{-1}})$ by Lemma \ref{Le1}(3). Thus, $$\widehat{K}\widehat{K^{-1}}=|K|\widehat{1}+\alpha\widehat{K}+\alpha\widehat{K^{-1}}+\widehat{S}$$ where $(\widehat{S},\widehat{L})=0$ if $L\in \{1, K, K^{-1}\}$.\\
 
We distinguish between whether $S=\emptyset$ or not. Suppose first that $S=\emptyset$. Since $K^3=KK^2=K(K \cup K^{-1})=\{1\} \cup K \cup K^{-1}$, we obtain by induction that $K^n=\{1\} \cup K \cup K^{-1}$ for all $n\geq 3$. Thus, $\langle K\rangle=KK^{-1}=\{1\} \cup K \cup K^{-1}$. As all non-trivial elements in $\langle K\rangle$ have the same order, it follows that $\langle K\rangle$ is $p$-elementary abelian for some prime $p$, and we have finished. Assume now that $S\neq \emptyset$. We have $$\widehat{K}(\widehat{K}\widehat{K^{-1}})=\widehat{K}(|K|\widehat{1}+\alpha\widehat{K}+\alpha\widehat{K^{-1}}+\widehat{S})=|K|\widehat{K}+\alpha\widehat{K}^2+\alpha\widehat{K}\widehat{K^{-1}}+\widehat{K}\widehat{S}$$ and on the other hand, $$\widehat{K}^2\widehat{K^{-1}}=(\alpha\widehat{K}+\beta\widehat{K^{-1}})\widehat{K^{-1}}=\alpha\widehat{K}\widehat{K^{-1}}+\beta\widehat{K^{-1}}\widehat{K^{-1}}.$$ Taking into account both equalities and that $\widehat{K^{-1}}\widehat{K^{-1}}=\beta\widehat{K}+\alpha\widehat{K^{-1}}$, we obtain $$|K|\widehat{K}+\alpha(\alpha\widehat{K}+\beta\widehat{K^{-1}})+\widehat{K}\widehat{S}=\beta(\beta\widehat{K}+\alpha\widehat{K^{-1}}).$$ If we rearrange, we obtain $$\widehat{K}\widehat{S}=(\beta^2-|K|-\alpha^2)\widehat{K}.$$ In particular, we conclude that $KS=K$ and by applying Lemma \ref{L1}, it easily follows that $\langle S\rangle$ is solvable. Consider now $\overline{G}=G/\langle S\rangle$. We observe from the hypothesis that $\overline{K^2}=\overline{K}\cup \overline{K^{-1}}$, so $\langle \overline{K}\rangle$ is solvable by induction. Consequently, $\langle K\rangle$ is solvable.\\

Now let us see that $x$ is a $p$-element. Since $KS=K$, we have that $K^{-1}K\langle S\rangle=K^{-1}K$ and $KK^{-1}$ is union of left cosets of the subgroup $\langle S\rangle$, so $|\langle S\rangle|$ divides $|KK^{-1}|=1 + 2|K| + |S|$. Hence, $|\langle S\rangle|$ divides $1 + |S|$ because $|\langle S\rangle|$ divides $|K|$. It necessarily follows that $\langle S\rangle=\{1\} \cup S$. On the other hand, since $K^3=KK^2=K(K \cup K^{-1})=\{1\} \cup K \cup K^{-1} \cup S$ and $KS=K$, we easily obtain by induction on $n$ that $K^n=\{1\} \cup K \cup K^{-1} \cup S$ for all $n\geq 3$. Thus, $\langle K\rangle=KK^{-1}=\{1\} \cup K \cup K^{-1} \cup S$. We write $\overline{G}=G/\langle S\rangle$, so $\langle \overline{K}\rangle$ is $p$-elementary abelian for some prime $p$ because $\langle \overline{K}\rangle$ is a minimal normal subgroup with all non-trivial elements of the same order. We write $x=x_px_{p'}$ where $x_p$ and $x_{p'}$ are the $p$-part and the $p'$-part of $x$ respectively. Then $x_{p'}$ and $x_{p'}^{-1}$ are in $\langle S\rangle$. Consequently, $x_p=x x_{p'}^{-1}\in K\langle S\rangle=K$ and so $K$ is a conjugacy class of a $p$-element as required.$\Box$\\

\begin{example} In Theorem D, the case in which $\langle K\rangle$ is non-abelian can happen. We take for instance the group $G=((\mathbb{Z}_2 \times \mathbb{Z}_2 \times \mathbb{Z}_2) \rtimes \mathbb{Z}_7) \rtimes \mathbb{Z}_3=SmallGroup(168,43)$ which has a conjugacy class $K$ of elements of order 7 and size 24 satisfying $K^2=K \cup K^{-1}$. Also, $\langle K\rangle=(\mathbb{Z}_2 \times \mathbb{Z}_2 \times \mathbb{Z}_2) \rtimes \mathbb{Z}_7$.
\end{example}

The following property is useful to check Conjecture \ref{Con3} from the character table, in particular for the sporadic simple groups.

\begin{theorem}\label{Char3} Let $G$ be a finite group and let $K$ be a conjugacy class of an element $x\in G$. Then $K^n=D \cup D^{-1}$ where $D$ is a conjugacy class if and only if there exist positive integers $m_1$ and $m_2$ such that $$\chi(x)^n|K|^n=\chi(1)^{n-1}|D|(m_1\chi(x^n)+m_2\chi(x^{-n}))$$ for all $\chi \in {\rm Irr}(G)$ and $|K|^n=(m_1+m_2)|D|$ where\\

\begin{center} 
\begin{tabular}{cc}
$m_1=\frac{|K|^n}{|G|}\sum_{\chi \in {\rm Irr}(G)}\frac{\chi(x)^n\overline{\chi(x^n)}}{\chi^{n-1}(1)}$ & and $m_2=\frac{|K|^n}{|G|}\sum_{\chi \in {\rm Irr}(G)}\frac{\chi(x)^n\chi(x^{n})}{\chi^{n-1}(1)}.$ \end{tabular}
\end{center}
In particular, $$\chi(x)^n+\chi(x^{-1})^n=\chi(1)^{n-1}(\chi(x^n)+\chi(x^{-n}))$$ for all $\chi \in {\rm Irr}(G)$.
\end{theorem}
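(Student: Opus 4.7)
The plan is to follow the template of Theorems \ref{Char1} and \ref{Char2}: pass through the class-sum algebra $\mathbb{C}[G]$, evaluate central characters, and use orthogonality. The only new wrinkle is that two (instead of one) non-identity classes appear in $K^n$, so the converse requires two applications of the second orthogonality relation rather than one.

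For the forward direction, assume $K^n=D\cup D^{-1}$, and by relabelling assume $D=(x^n)^G$. Theorem \ref{multi} gives positive integers $m_1,m_2$ with $\widehat{K^n}=m_1\widehat{D}+m_2\widehat{D^{-1}}$ expressed by the stated formulas, and comparing cardinalities yields $|K|^n=(m_1+m_2)|D|$. For $\chi\in\mathrm{Irr}(G)$ with central character $w_\chi$, applying the identity $w_\chi(\widehat{K})^n=m_1 w_\chi(\widehat{D})+m_2 w_\chi(\widehat{D^{-1}})$ together with $w_\chi(\widehat{D^{\pm 1}})=|D|\chi(x^{\pm n})/\chi(1)$ and clearing denominators produces the claimed identity.

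Conversely, suppose the character identity holds and write $K^n=\bigcup_{j=1}^{r}D_j$ with $D_j=d_j^G$ and $\widehat{K^n}=\sum_j n_j\widehat{D_j}$. Applying $w_\chi$ to both sides of this decomposition and comparing with the hypothesis yields
\[
m_1|D|\chi(x^n)+m_2|D|\chi(x^{-n})=\sum_{j=1}^{r}n_j|D_j|\chi(d_j)\qquad(\chi\in\mathrm{Irr}(G)).
\]
Multiplying by $\overline{\chi(x^n)}$ and summing over $\chi$, the column-orthogonality relation $\sum_\chi \chi(g)\overline{\chi(h)}=\delta_{g^G,h^G}|{\bf C}_G(g)|$ kills every term except the one with $d_j$ conjugate to $x^n$, forcing some $D_j=D$ with $n_j=m_1$. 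Multiplying instead by $\overline{\chi(x^{-n})}$ pins down some $D_k=D^{-1}$ with $n_k=m_2$, and multiplying by $\overline{\chi(d_\ell)}$ for any other $\ell$ forces $n_\ell=0$. Hence $K^n=D\cup D^{-1}$.

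Finally, for the "In particular" identity, apply the main equation to $K^{-1}=(x^{-1})^G$, whose $n$-th power is $D^{-1}\cup D$; Lemma \ref{Le1}(1) shows that the two multiplicities swap, giving
\[
\chi(x^{-1})^n|K|^n=\chi(1)^{n-1}|D|\bigl(m_1\chi(x^{-n})+m_2\chi(x^n)\bigr).
\]
Adding this to the identity for $x$ and using $|K|^n=(m_1+m_2)|D|$ collapses the right-hand side to $\chi(1)^{n-1}|K|^n(\chi(x^n)+\chi(x^{-n}))$, and dividing by $|K|^n$ finishes. The main obstacle is the uniqueness step in the converse: without the character hypothesis for every $\chi$, one cannot rule out extraneous classes appearing in $K^n$. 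The double orthogonality trick of multiplying by $\overline{\chi(x^n)}$ and $\overline{\chi(x^{-n})}$ separately before summing is exactly what isolates $D$ and $D^{-1}$ and makes the argument go through.
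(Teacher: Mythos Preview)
Your proposal is correct and follows essentially the same route as the paper's proof: central characters applied to the class-sum identity for the forward direction, column orthogonality (multiplying by $\overline{\chi(x^n)}$ and then by $\overline{\chi(x^{-n})}=\chi(x^n)$) for the converse, and summing the identity for $x$ with the companion identity for $x^{-1}$ to obtain the ``in particular'' statement. The only cosmetic differences are that the paper obtains the $x^{-1}$ identity by complex-conjugating the $x$ identity directly rather than invoking Lemma~\ref{Le1}(1), and it does not spell out the separate $\overline{\chi(d_\ell)}$ multiplication to kill the remaining $n_\ell$, instead absorbing that into ``argue similarly''.
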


\begin{proof} Assume that $K^n=D \cup D^{-1}$ and we write $\widehat{K^n}=m_1\widehat{D}+m_2\widehat{D^{-1}}$ where $m_1$ and $m_2$ are positive integers that can be determined by the character table by using Theorem \ref{multi}. Then $|K|^n=(m_1+m_2)|D|$. Let $\chi \in {\rm Irr}(G)$ and let $R$ and $w_\chi$ be as in Theorem \ref{Char1}. We have \begin{equation*}
R(\widehat{K^n})=R(\widehat{K})^n=m_1R(\widehat{D})+m_2R(\widehat{D^{-1}})
\end{equation*}
and
\begin{equation*}
w_{\chi}(\widehat{K})^n=m_1w_{\chi}(\widehat{D})+m_2w_{\chi}(\widehat{D^{-1}}).
\end{equation*} 
If we suppose that $x^n \in D$ (analogously if $x^n\in D^{-1}$), then $$|K|^{n}\frac{\chi(x)^n}{\chi(1)^n}=m_1\frac{|D|\chi(x^n)}{\chi(1)}+m_2\frac{|D|\chi(x^{-n})}{\chi(1)}$$ and then $$|K|^n\chi(x)^n=\chi(1)^{n-1}|D|(m_1\chi(x^n)+m_2\chi(x^{-n}))$$ for all $\chi \in {\rm Irr}(G)$, as wanted. By taking conjugates in the above equation we obtain $$|K|^n\chi(x^{-1})^n=\chi(1)^{n-1}|D|(m_2\chi(x^n)+m_1\chi(x^{-n}))$$ for all $\chi \in {\rm Irr}(G)$. The last part of the theorem follows by summing the previous equations.\\

Conversely, assume that there exist $m_1$ and $m_2$ satisfying the equalities with characters. We write $K^n=D_1 \cup \cdots \cup D_r$ with $D_i$ a conjugacy class for all $1\leq i\leq r$. We write $\widehat{K^n}=n_1\widehat{D_1}+ \cdots +n_r\widehat{D_r}$ and notice that $|K|^n=n_1|D_1|+\cdots +n_r|D_r|$. Let $\chi \in {\rm Irr}(G)$ and let $R$ and $w_\chi$ be as before. Then \begin{equation*}
R(\widehat{K^n})=R(\widehat{K})^n=n_1R(\widehat{D_1})+\cdots +n_rR(\widehat{D_r})
\end{equation*}
and by hypothesis we know
$$\chi(x)^n|K|^n=\chi(1)^{n-1}|D|(m_1\chi(x^n)+m_2\chi(x^{-n}))$$ for all $\chi \in {\rm Irr}(G)$.
Therefore, 
\begin{equation}\label{CAR1}
|D|m_1\chi(x^n)+|D|m_2\chi(x^{-n})=n_1|D_1|\chi(d_1)+\cdots +n_r|D_r|\chi(d_r).
\end{equation} 
By multiplying both sides by $\overline{\chi(x^n)}$ we get
 \begin{equation*}
|D|m_1\chi(x^n)\overline{\chi(x^n)}+|D|m_2\chi(x^{-n})\overline{\chi(x^n)}=n_1|D_1|\chi(d_1)\overline{\chi(x^n)}+\cdots +n_r|D_r|\chi(d_r)\overline{\chi(x^n)}
\end{equation*} 
From this equation we obtain 

$$|D|m_1\sum_{\chi \in {\rm Irr}(G)}\chi(x^n)\overline{\chi(x^n)}+|D|m_2\sum_{\chi \in {\rm Irr}(G)}\chi(x^{-n})\overline{\chi(x^n)}=|K|^n|{\rm \bf C}_G(x^n)|=$$
$$=n_1|D_1|\sum_{\chi \in {\rm Irr}(G)}\chi(d_1)\overline{\chi(x^n)}+\cdots +n_r|D_r|\sum_{\chi \in {\rm Irr}(G)}\chi(d_r)\overline{\chi(x^n)}.$$
Then $D_i= D=(x^n)^G$ for some $i$. Without loss of generality, we can assume that $D_1=D$. Now, if we multiply both sides of Eq.(\ref{CAR1}) by $\chi(x^n)$ and argue similarly we conclude that $D_2=D^{-1}$ and $n_i=0$ for all $i\neq 1,2$. This means that $K^n=D \cup D^{-1}$.
\end{proof}

\begin{remark} Let $G$ be a group and let $K$ be a conjugacy class of an element $x\in G$. If $K^n=D \cup D^{-1}$ for some $n\in \mathbb{N}$, $n\geq 2$ and $D$ a conjugacy class, then $G$ is not a sporadic simple group. 
\end{remark}

\begin{proof}
Let $x, x^n\in G$ such that $K=x^G$, $D=(x^n)^G$ and $K^n=D \cup D^{-1}$ for some $n \in \mathbb{N}$. We show that for any sporadic simple group there is no conjugacy class satisfying the hypotheses of the theorem. By Theorem \ref{Char3}, we know that the hypotheses imply
\begin{equation}\label{EC1}
\chi(x)^n+\chi(x^{-1})^n=\chi(1)^{n-1}(\chi(x^n)+\chi(x^{-n}))
\end{equation} 
for all $\chi \in {\rm Irr}(G)$. The aim is to find some irreducible character that does not satisfy Eq.(\ref{EC1}). Recall that the smallest integer $m$ satisfying $C^m=G$ for each non-trivial conjugacy class $C$ of $G$ is called the covering number of $G$. The covering number of each sporadic simple group is at most 6 (\cite{Zisser} and \cite{products}). It can be checked by using the character tables (for example included in GAP) that for any of these groups and any two non-trivial conjugacy classes of it and $n<6$, there exists an irreducible character which does not satisfy Eq.(\ref{EC1}).
\end{proof}

% BibTeX users please use one of
%\bibliographystyle{spbasic}      % basic style, author-year citations
%\bibliographystyle{spmpsci}      % mathematics and physical sciences
%\bibliographystyle{spphys}       % APS-like style for physics
%\bibliography{}   % name your BibTeX data base

\begin{thebibliography}{99}


\bibitem{AradFisman} Z. Arad and E. Fisman, An analogy between products of two conjugacy classes and products of two irreducible characters in finite groups. \textit{Proc. Edinb. Math. Soc.} \textbf{30} 7-22 (1987).

\bibitem{products} Z. Arad and M. Herzog, \textit{Products of conjugacy classes in groups}, Lecture Notes in Mathematics, 1112, Springer-Verlag, Berlin, (1985).


\bibitem{Nuestro5} A. Beltr\'an, M.J. Felipe and C. Melchor, Squares of real conjugacy classes in finite groups, \textit{Ann. Mat. Pura Appl.} \textbf{197} (2) 317-328 (2018).

\bibitem{Nuestro6} A. Beltr\'an, M.J. Felipe and C. Melchor, Multiplying a conjugacy class by its inverse in a finite group,  \textit{Israel J. Math.} \textbf{227}(2) 811-825 (2018).


\bibitem{GAP} The GAP Group, GAP - Groups, Algorithms and
Programming, Vers. 4.7.7; 2015. (http://www.gap-system.org)

\bibitem{Goreinstein} D. Gorenstein, \textit{Finite groups}, AMS Chelsea Publishing, (1980).

\bibitem{GurMalle} R.M. Guralnick and G. Malle, Variations on the Baer-Suzuki theorem, {\it Math. Z.} \textbf{279} 981-1006 (2015).

\bibitem{GurRob} R.M. Guralnick and G.R. Robinson, On extensions on the Baer-Suzuki theorem, {\it Israel J. Math.} \textbf{82} 281-297 (1993).

\bibitem{GuralnickNavarro} R.M. Guralnick and G. Navarro, Squaring a conjugacy class and cosets of normal subgroups, Proc. \textit{Am. Math. Soc.}  {\bf 144} (5) 1939-1945 (2016).

\bibitem{Huppert} B. Huppert, {\em Character Theory of Finite Groups}, Walter de Gruyter, Berl\'in. New York, (1998).

\bibitem{Isaacs} I.M. Isaacs, \textit{Character theory of finite groups}, Academic Press, Inc, New York, (1976).

\bibitem{MooriViet} J. Moori, H.P. Tong-Viet, Products of conjugacy classes in simple groups, {\it Quaest. Math.} \textbf{34} (4) 433-439 (2011).

\bibitem{Zisser} I. Zisser, The covering numbers of the sporadic simple groups, \textit{Israel J. Math.}  {\bf 67} (2) 217-224 (1989).
\end{thebibliography}

% Non-BibTeX users please use
%\begin{thebibliography}{}
%
% and use \bibitem to create references. Consult the Instructions
% for authors for reference list style.
%
%\bibitem{RefJ}
% Format for Journal Reference
%Author, Article title, Journal, Volume, page numbers (year)
% Format for books
%\bibitem{RefB}
%Author, Book title, page numbers. Publisher, place (year)
% etc
%\end{thebibliography}

\end{document}